\numberwithin{equation}{section}
\numberwithin{figure}{section}
\newtheorem{theorem}{Theorem}[section]
\newtheorem{proposition}[theorem]{Proposition}
\newtheorem{lemma}[theorem]{Lemma}
\newtheorem{corollary}[theorem]{Corollary}
\theoremstyle{definition}
\newtheorem{definition}[theorem]{Definition}
\newtheorem{example}[theorem]{Example}
\theoremstyle{remark}
\DeclareMathOperator{\pk}{\mathnormal{pk}}
\DeclareMathOperator{\dd}{\mathnormal{dd}}
\DeclareMathOperator{\wpk}{\mathnormal{wpk}}
\DeclareMathOperator{\wdd}{\mathnormal{wdd}}
\DeclareMathOperator{\st}{\mathnormal{st}}
\definecolor{light-gray}{gray}{0.6}
\definecolor{dark-gray}{gray}{0.4}
\newcommand{\us}{-- ++(1,1)}
\newcommand{\vs}[1]{node[fill=white,circle,draw,inner sep=1pt]{$#1$}}
\newcommand{\ds}{-- ++(1,-2)}
\newcommand{\ls}[1]{-- ++(#1,0)}
\newcommand{\mls}{\ls{1}}
\newcommand{\mds}{-- ++(1,-1)}
\newcommand{\subjclass}[2][1991]{%
  \let\@oldtitle\@title%
  \gdef\@title{\@oldtitle\footnotetext{#1 \emph{Mathematics subject classification:} #2.}}%
}
\newcommand{\keywords}[1]{%
  \let\@@oldtitle\@title%
  \gdef\@title{\@@oldtitle\footnotetext{\emph{Key words and phrases:} #1.}}%
}
\begin{document}

\title{Distribution of peak heights modulo $k$ and double descents on $k$-Dyck paths}

\author{
\textsc{Alexander Burstein}\\
Department of Mathematics\\
Howard University\\
Washington, DC 20059, USA\\
\texttt{aburstein@howard.edu} 
}

\date{February 27, 2023}

\keywords{$k$-Dyck path, peak height}

\subjclass[2020]{05A05, 05A15}

\maketitle

\begin{abstract}
We show that the distribution of the number of peaks at height $i$ modulo $k$ in $k$-Dyck paths of a given length is independent of $i\in[0,k-1]$ and is the reversal of the distribution of the total number of peaks. Moreover, these statistics, together with the number of double descents, are jointly equidistributed with any of their permutations. We also generalize this result to generalized Motzkin paths and generalized ballot paths.
\end{abstract}

\section{Introduction} \label{sec:intro}

Given a positive integer $k$, a \emph{$k$-Dyck path} of \emph{down-size} $n$ is a lattice path from $(0,0)$ to $((k+1)n,0)$ with unit steps $u=(1,1)$ (an \emph{up-step}) and $d=(1,-k)$ (a \emph{down-step}) that stays in the first quadrant. This is a generalization of the well-known Dyck paths (see e.g.\ \cite{StEC2}), where $k=1$ and the down-size is called the \emph{semilength}. Note that a $k$-Dyck path of down-size $n$ contains exactly $n$ down-steps and $kn$ up-steps. 

A $ud$ block in a $k$-Dyck path is called a \emph{peak}, and the \emph{height} of that peak is defined as the second coordinate of the left endpoint of the $d$ step in the peak. Given a peak $ud$, we also call the vertex between the $u$ and $d$ steps a peak. Similarly, a $dd$ block (and the vertex in the middle of it) is called a \emph{double descent}. Any Dyck path $P$ with at least one edge has at least one peak, so define $\pk(P)$ to be the number of non-rightmost peaks of $P$ (that is, the number of peaks minus one). Similarly, define $\dd(P)$ to be the number of double descents of $P$.

Deutsch \cite{Deutsch99} showed that the statistics $\pk$ and $\dd$ are equidistributed on the set of Dyck paths of any given length and gave an involution on the set of Dyck paths that interchanged these two statistics. In other words, the bistatistics $(\pk,\dd)$ and $(\dd,\pk)$ are jointly equidistributed on the set of Dyck paths. We generalize this result greatly, first to $k$-Dyck paths, then to the generalizations colored Motzkin and Schr\"{o}der paths. Our results also extend those of DeJager et al.\ \cite{DNSD}, who considered, in particular, $k$-Dyck paths and colored Motzkin and Schr\"{o}der paths where all peaks have a fixed \emph{peak parity} $i$ modulo $k$. Likewise, we generalize the peak enumeration results of Mansour and Shattuck \cite{MS13} for $(k,a)$-Dyck paths.

In Section \ref{sec:main} of this paper we will consider a group of statistics enumerating the heights of peaks of each peak parity $i$ modulo $k$ in $k$-Dyck paths ($0\le i\le k-1$), and prove that they not only have the same distribution, but also have the same joint distribution as any permutation of themselves. Furthermore, these statistics are naturally related to the number-of-$i$-th-children statistics on $(k+1)$-ary trees for $i=1,2,\dots,k+1$, which are easily seen to have the same joint distribution as any of their permutations.

In Section \ref{sec:ext} we will extend and generalize the results of Section \ref{sec:main} to include level steps (e.g.\ generalized colored Motzkin and Schr\"{o}der paths) and some paths that do not end at their initial height (e.g.\ generalized colored ballot paths).

\section{Main results} \label{sec:main}

Let $\mathcal{P}^k_n$ be the set of $k$-Dyck paths of down-size $n\ge 0$. Note that the rightmost peak of any path $P\in\mathcal{P}^k_n$ is followed only by down-steps and thus is at height $0\negthickspace\mod k$. Therefore, we will only consider non-rightmost peaks of paths in $\mathcal{P}^k_n$. 

We define the following statistics on $\mathcal{P}^k_n$. Given a $k$-Dyck path $P\in\mathcal{P}^k_n$, for each $i=0,1,\dots,k-1$, let $\pk(P)$ be the total number of non-rightmost peaks in $P$, and let $\pk_i(P)$ be the number of non-rightmost peaks in $P$ that are at heights $i\negthickspace\mod k$. (Of course, the rightmost peak of $P$ is always at height $0\negthickspace\mod k$, since it is followed only by down-steps, but we will need the values of the above statistics on some subpaths of $P$ as well, where the height modulo $k$ of the rightmost peak may be different.) We also define $\dd(P)$ to be the number of double descents in $P$. Note that every $d$ step of $P$ follows either a $u$ step or a $d$ step, and thus $\dd(P)=n-1-\pk(P)$ for $n\ge 1$ (we subtract $1$ for the rightmost peak of $P$). Then we have the following result.

\begin{theorem} \label{thm:peak-dist}
The $(k+1)$-statistic $(\pk_0,\pk_1,\dots,\pk_{k-1},\dd)$ on $\mathcal{P}^k_n$ is jointly equidistributed with any of its permutations.
\end{theorem}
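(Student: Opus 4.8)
The plan is to set up a generating function that tracks all $k+1$ statistics simultaneously, using the standard first-return (or arch) decomposition of $k$-Dyck paths, and then exhibit a symmetry of that generating function under the symmetric group $S_{k+1}$ permuting the variables. Concretely, I would introduce commuting indeterminates $x_0, x_1, \dots, x_{k-1}$ (one for each peak parity) and $y$ (for double descents), and define
\begin{equation*}
F(t; x_0, \dots, x_{k-1}, y) = \sum_{n \ge 0} t^n \sum_{P \in \mathcal{P}^k_n} x_0^{\pk_0(P)} \cdots x_{k-1}^{\pk_{k-1}(P)} y^{\dd(P)}.
\end{equation*}
The theorem is equivalent to the assertion that $F$ is a symmetric function of $(x_0, \dots, x_{k-1}, y)$. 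Since $S_{k+1}$ is generated by adjacent transpositions, it suffices to produce, for each generator, an involution on $\bigcup_n \mathcal{P}^k_n$ (length-preserving) that swaps the corresponding two statistics and fixes the rest; this is the natural generalization of Deutsch's involution, which handles exactly the transposition of $\pk$ ($=\pk_0$ when $k=1$) with $\dd$.

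**The key combinatorial step** is to understand how the statistics $(\pk_0, \dots, \pk_{k-1}, \dd)$ behave under the first-return decomposition. A nonempty $P \in \mathcal{P}^k_n$ factors as $P = u\, Q_1\, Q_2 \cdots Q_k\, d\, R$, where the initial $u$ is matched by a $d$ step, and between consecutive "matched" down-steps of that outermost arch sit $k$ subpaths $Q_1, \dots, Q_k$ each lying one level up, followed by the tail $R$ at ground level. Raising a subpath by one level cyclically shifts its peak-parity statistics: a peak at height $i \bmod k$ inside $Q_j$ raised by $j$ levels contributes to $\pk_{(i+j) \bmod k}$ of $P$. The crucial point — and I expect this to be where the real work lies — is to track what happens at the "seams" of the decomposition (the vertices where $Q_j$ meets $Q_{j+1}$, and where $Q_k$ meets the closing $d$, and where $d$ meets $R$): each such seam is either a peak (if the step before is a $u$) or a double descent (if the step before is a $d$), and its parity is determined by the level. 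One must check that the bookkeeping of these $k+1$ seam-vertices, together with the cyclic relabeling of the $Q_j$, produces a functional equation for $F$ that is manifestly invariant under cyclically permuting $(x_0, \dots, x_{k-1})$ and, with a bit more care, under the full $S_{k+1}$ acting on $(x_0, \dots, x_{k-1}, y)$.

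**An alternative and probably cleaner route** — which the mention of $(k+1)$-ary trees strongly hints at — is to bypass generating functions and argue bijectively. There is a classical bijection between $k$-Dyck paths of down-size $n$ and $(k+1)$-ary trees with $n$ internal nodes; under a suitable version of it, the $k+1$ vertices that are neither the root-arch's features nor the rightmost peak correspond to the $n-1$ non-root internal nodes, and each such node is the $j$th child of its parent for a unique $j \in \{1, \dots, k+1\}$. One would show that the statistic "$\pk_i$" (for $i = 1, \dots, k-1$), "$\pk_0$", and "$\dd$" match up, in some order, with "number of internal nodes that are a $j$th child" for $j = 1, \dots, k+1$. Since relabeling which child-slot is called "the $j$th" is just a relabeling of the $k+1$ edge-colors emanating from each node — an operation that is obviously a bijection on $(k+1)$-ary trees preserving the number of internal nodes — the joint distribution is symmetric under all of $S_{k+1}$, and pulling back through the path-tree bijection gives the theorem.

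**The main obstacle**, in either approach, is pinning down the correct bijection/decomposition so that the seam vertices are accounted for exactly once and with the right parities; off-by-one and off-by-$k$ errors in the correspondence between "height $i \bmod k$" and "$j$th child" are easy to make, and the asymmetry created by singling out the rightmost peak (which is forced to have parity $0$) needs to be handled carefully — this is presumably why the statement restricts to \emph{non-rightmost} peaks and why the identity $\dd(P) = n - 1 - \pk(P)$ is flagged in the setup. I would first nail the $k=1$ case against Deutsch's involution to fix conventions, then verify the tree correspondence on small cases ($k=2$, $n \le 3$) before committing to the general argument.
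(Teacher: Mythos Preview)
Your ``alternative and probably cleaner route'' via $(k+1)$-ary trees is exactly the paper's approach: it constructs a recursive bijection $\psi:\mathcal{P}^k_n\to\mathcal{T}^{k+1}_n$ sending $(\pk_0,\dots,\pk_{k-1},\dd)$ to $(e_1,\dots,e_{k+1})$, after which the symmetry is immediate since permuting child-slots in a $(k+1)$-ary tree is trivially a bijection. The obstacle you flag---that lifting a subpath by $i$ cyclically shifts its peak parities and must be undone---is precisely what the paper handles with its ``cyclic lowering'' map $\lambda^i$ (built from a block-permuting operation $\kappa$), which is the one genuinely new ingredient you would still need to supply.
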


We note here that because of the total symmetry of these statistics modulo $k$, it may appear useful to consider paths on a cylinder of circumference $k$ rather than on a plane. However, the paths in the plane also stay on or above the $x$-axis, and the corresponding restriction in paths on a cylinder would be more difficult to track. Thus, we stay with the paths in the plane.

We obtain this result by giving a recursive bijection between $k$-Dyck paths and $(k+1)$-ary trees that maps the above statistics on $k$-Dyck paths to the number of $(i+1)$-st children statistics on $(k+1)$-ary trees for $i=0,1,\dots,k-1,k$.

An $l$-ary tree is a rooted ordered tree in which each node has at most $l$ children assigned positions in $\{1,2,\dots,l\}$ from left to right, not necessarily consecutively.

Let $\mathcal{T}^{l}_n$ be the set of $l$-ary trees on $n$ vertices, $n\ge 1$. Define the following statistics on $\mathcal{T}^{l}_n$. Given an $l$-ary tree $T\in\mathcal{T}^{l}_n$, let $e_i(T)$ be the number of $i$-th children in $T$, i.e.\ nodes that are children in position $i$ from the left, for $1\le i\le l$. Equivalently, $e_i(T)$ is the number of $i$-th edges in $T$, i.e.\ edges from a vertex to its child in position $i$.

\begin{proposition} \label{prop:trees}
The $l$-statistic $(e_1,e_2,\dots,e_l)$ on $\mathcal{T}^{l}_n$ is jointly equidistributed with any of its permutations.
\end{proposition}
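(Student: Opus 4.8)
The plan is to exhibit, for each pair of positions $1 \le a < b \le m$, a bijection on $\mathcal{T}^m_n$ that swaps the statistics $e_a$ and $e_b$ while fixing all other $e_i$; since transpositions generate the symmetric group, iterating such involutions realizes every permutation of $(e_1,\dots,e_m)$, which is exactly what joint equidistribution requires. The key observation is that swapping positions $a$ and $b$ is a purely local operation: at every single vertex $v$ of $T$, the child in position $a$ (if present) and the child in position $b$ (if present) are independent of one another and of the children in all other positions, because an $m$-ary tree records, at each node, a partial function from $\{1,\dots,m\}$ to subtrees with no adjacency constraints.

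Concretely, I would define the map $\sigma_{a,b}\colon \mathcal{T}^m_n \to \mathcal{T}^m_n$ that leaves the underlying vertex set and all parent–child relationships untouched, and only relabels edge positions: at each vertex, an edge to a child in position $a$ becomes an edge in position $b$, an edge in position $b$ becomes one in position $a$, and every edge in a position other than $a,b$ is unchanged. One checks immediately that $\sigma_{a,b}(T)$ is again a valid $m$-ary tree (at each node the position multiset is still a subset of $\{1,\dots,m\}$ with no repeats), that $\sigma_{a,b}$ is an involution, and that it induces on statistics the transposition $e_a \leftrightarrow e_b$ while fixing $e_i$ for $i \notin\{a,b\}$. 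Composing these involutions according to a reduced word for a target permutation $\pi \in S_m$ gives a bijection realizing $\pi$ on the statistic vector.

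There is essentially no obstacle here — the proposition is a soft, structural fact, and the only thing to be careful about is the definition of $m$-ary tree used in the paper: children carry positions in $\{1,\dots,m\}$ assigned ``not necessarily consecutively,'' so the swap is legitimate precisely because positions are free labels rather than an ordered list that must be compacted. I would state this as the one line of the argument that actually uses the hypothesis. (Note that the analogous statement fails for ordinary ordered/plane trees, where children occupy consecutive positions $1,2,\dots,\deg(v)$; there swapping positions $a$ and $b$ at a node of degree between $a$ and $b$ is not well defined. This contrast is worth a remark, as it explains why the $(k+1)$-ary tree model — rather than plane trees — is the right combinatorial home for the peak-height statistics of Theorem \ref{thm:peak-dist}.) Finally, I would record the generating-function corollary: the common distribution is $\sum_{T \in \mathcal{T}^m_n} \prod_{i=1}^m x_i^{e_i(T)}$, a symmetric polynomial in $x_1,\dots,x_m$, which upon specialization recovers known refinements of the Fuss–Catalan numbers.
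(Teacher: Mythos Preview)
Your proof is correct and essentially the same as the paper's: the paper's one-line argument also permutes child positions at every node (``recursively permute the subtrees of each node of $T$ starting from the root according to the permutation of the statistics $e_i$''), leaving the details to the reader. Your version supplies those details---decomposing into transpositions and checking the swap is well defined because positions are free labels---but the underlying bijection is identical.
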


\begin{proof}
This is almost immediately obvious. It is enough to recursively permute the subtrees of each node of $T$ starting from the root according to the permutation of the statistics $e_i$, $i=1,\dots,l$. We will leave the details of this bijection to the reader.
\end{proof}

We can now connect Proposition \ref{prop:trees} to Theorem \ref{thm:peak-dist}. We will begin by defining two operations on paths.

\begin{definition} \label{def:lifting}
Given a $k$-Dyck path $Q$ starting at height $0$, define its \emph{$i$-lifting} $Q^{(i)}$ as the path with the same sequence of steps but starting at height $i$. 
\end{definition}

Note that any $k$-Dyck path $Q$ with the maximal down-step suffix of length $m\ge 1$ has a unique decomposition as
\begin{equation} \label{eq:decomp-right-peak}
Q=Q_{0}^{(0)}uQ_{1}^{(1)}u\dots Q_{km-1}^{(km-1)}u\underbrace{dd\dots d}_{m},
\end{equation}
where each $Q_{j}$, $0\le j\le km-1$, is a $k$-Dyck path.

\begin{definition} \label{def:cyclic-shift}
Let $Q$ be a $k$-Dyck path with the maximal down-step suffix of length $m\ge 1$ and with the decomposition as in \eqref{eq:decomp-right-peak}. Define a permutation $\pi_m$ on the set $[0,km-1]$ by
\[
\pi_m=\bigl(k-1,\dots,1,0\bigr)\bigl(2k-1,\dots,k+1,k\bigr)\dots\bigl(km-1,\dots,k(m-1)+1,k(m-1)\bigr)
\]
in cycle notation. Define the \emph{cyclic shift} $\kappa(Q)$ of $Q$ by replacing each $Q_j^{(j)}$ with $Q_{\pi_m(j)}^{(j)}$, where $j\in[0,km-1]$. In other words,
\[
Q_j^{(j)}\mapsto 
\begin{cases}
Q_{j+k-1}^{(j)}, & \text{if} \ \ j\equiv 0\!\!\pmod k,\\[4pt]
Q_{j-1}^{(j)}, & \text{otherwise}.  
\end{cases}
\]
\end{definition}

Note that $\kappa$ is a bijection on $\mathcal{P}^k_n$ since $\pi_m$ is a bijection for any $m\ge 1$,
and that $\kappa^{k}$ is the identity map on $\mathcal{P}^k_n$ for any $n$.

\begin{lemma} \label{lem:cyclic-peaks}
For any $k$-Dyck path $Q$ with the maximal down-step suffix of length $m\ge 1$, the map ${\lambda:Q^{(1)}\mapsto\kappa(Q)}$ preserves peak heights modulo $k$ in each segment $Q_j$, $0\le j\le km-1$, as well as the number of double descents in $Q$.
\end{lemma}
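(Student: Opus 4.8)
The plan is to view $Q^{(1)}$ and $\kappa(Q)$ as built from the \emph{same} pieces --- the segments $Q_j$ of the decomposition \eqref{eq:decomp-right-peak} (available since $n\ge 1$), together with the $kn$ separating up-steps and the terminal block $d^n$ --- and to track, for each $j$, the \emph{baseline height} at which the (unchanged) step sequence of $Q_j$ is inserted. A peak occurring at height $h$ inside $Q_j$, measured in $Q_j$'s own frame, occurs at height $h+(\text{baseline of }Q_j)$ in the ambient path, and the peaks lying strictly inside $Q_j$ in $Q^{(1)}$ and in $\kappa(Q)$ are matched by the identity on the step word of $Q_j$ (segment boundaries carry the pattern $du$ or $uu$, never a peak); so it suffices to show that the baseline of $Q_j$ in $Q^{(1)}$ and the baseline of $Q_j$ in $\kappa(Q)$ are congruent modulo $k$.

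I would read off the baselines directly. In $Q$ the segment $Q_j$ sits at baseline $j$ by \eqref{eq:decomp-right-peak}, and lifting the whole path by $1$ raises every baseline by $1$, so in $Q^{(1)}$ the baseline of $Q_j$ is $j+1$. For $\kappa(Q)$, the ``in other words'' clause of Definition \ref{def:cyclic-shift} says $\pi_n$ sends $j\mapsto j-1$ for $j\not\equiv 0\pmod k$ and $j\mapsto j+k-1$ for $j\equiv 0\pmod k$; inverting, $\pi_n^{-1}$ sends $j\mapsto j+1$ for $j\not\equiv k-1\pmod k$ and $j\mapsto j-k+1$ for $j\equiv k-1\pmod k$, so $\pi_n^{-1}(j)\equiv j+1\pmod k$ in every case. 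Since $\kappa(Q)$ puts the piece $Q_{\pi_n(p)}$ into slot $p$, the piece $Q_j$ lands in slot $\pi_n^{-1}(j)$, i.e.\ at baseline $\pi_n^{-1}(j)\equiv j+1\pmod k$, which matches the baseline $j+1$ of $Q_j$ in $Q^{(1)}$ modulo $k$. This proves the claim about peak heights. For the double descents, I would note that in any path with the skeleton of \eqref{eq:decomp-right-peak} every $dd$ factor lies strictly inside one segment or strictly inside the terminal $d^n$: the $kn$ separating steps are up-steps, the step just before $d^n$ is such a separator, and a nonempty $k$-Dyck segment ends in a down-step but is immediately followed here by a separating up-step, so no $dd$ straddles a segment boundary. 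Hence $\dd$ equals $\sum_j\dd(Q_j)+(n-1)$ for every such path; since $Q^{(1)}$ has the identical step word to $Q$ and $\kappa(Q)$ only permutes the multiset $\{Q_j\}$ while keeping $n$ fixed, the three values agree.

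The one point that needs care --- more a bookkeeping matter than a real obstacle --- is the direction of the permutation: $\kappa$ is specified slot by slot (slot $p$ receives $Q_{\pi_n(p)}$), so to locate a \emph{fixed} segment $Q_j$ inside $\kappa(Q)$ one must use $\pi_n^{-1}$, not $\pi_n$. Once that is set up correctly, the lemma reduces to the one-line congruence $\pi_n^{-1}(j)\equiv j+1\pmod k$ together with the elementary observation that the junctions in \eqref{eq:decomp-right-peak} carry $du$ or $uu$, never $dd$.
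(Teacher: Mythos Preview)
Your argument for peak heights is essentially the paper's own: both compute that the segment $Q_j$ sits at baseline $j+1$ in $Q^{(1)}$ and at baseline $\pi_n^{-1}(j)\in\{j+1,\,j-k+1\}$ in $\kappa(Q)$, hence at the same height modulo $k$; your explicit use of $\pi_n^{-1}$ to locate $Q_j$ is a welcome clarification of a point the paper leaves implicit. The one genuine difference is in the double-descent count: the paper deduces it indirectly from the preservation of $\pk=\sum_i\pk_i$ together with the identity $\pk+\dd=m-1$ on $\mathcal{P}^k_m$, whereas you argue directly that every $dd$ factor lies strictly inside some $Q_j$ or inside the terminal $d^n$, so $\dd=\sum_j\dd(Q_j)+(n-1)$ is invariant under permutation of the $Q_j$'s. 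Both are correct; your direct argument is self-contained and avoids invoking the global relation, while the paper's route is shorter once that relation is in hand.
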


We will call $\lambda(Q^{(1)})=\kappa(Q)$ the \emph{cyclic lowering} of $Q^{(1)}$.\footnote{This is not a unique way to define $\lambda$ so as to preserve non-rightmost peak heights modulo $k$. Another possible choice, among many, is to cyclically shift the entire sequence of subpaths $Q_{j}$, i.e.\ to define the permutation $\pi_m$ as the cycle $(km-1,km-2,\dots,1,0)$. However, in that case, $\kappa$ would have order $km$, which depends on $m$. As we defined it, $\pi_m$ has order $k$, independent of $m$.} 

\begin{proof}
Decomposing $Q$ as in \eqref{eq:decomp-right-peak} and lifting it by 1 unit, we see that each segment $Q_j$ in $Q^{(1)}$ starts at height $j+1$. On the other hand, the starting height of the segment $Q_j$ in $\kappa(Q)$ is $j-k+1$ if $j\equiv k-1\!\!\pmod k$, or $j+1$ if $j\not\equiv k-1\!\!\pmod k$. In other words, the height of peaks either does not change or decreases by $k$, and thus remains the same modulo $k$.

Thus, $\lambda$ preserves the statistics $\pk_i$ ($0\le i\le k-1$) on $Q$, and hence their sum $\pk$. But $\pk+\dd=n-1$ on $\mathcal{P}^{k}_{n}$ for any $n\ge 0$, and therefore $\lambda$ preserves the number of double descents as well.
\end{proof}

Applying cyclic lowering several times, we obtain an easy corollary of Lemma \ref{lem:cyclic-peaks}. 
\begin{corollary} \label{cor:cyclic-peaks}
For any $k$-Dyck path $Q$ with the maximal down-step suffix of length $m\ge 1$,  the map $\lambda^i:Q^{(i)}\mapsto\kappa^i(Q)$ on $k$-Dyck paths preserves peak heights modulo $k$ in each block $Q_j$, $0\le j\le km-1$. 
\end{corollary}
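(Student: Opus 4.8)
The plan is to induct on $i$, the base case $i=1$ being exactly Lemma \ref{lem:cyclic-peaks}. The only phenomenon being used is that a single application of cyclic lowering changes the starting height of each block $Q_j$ by either $0$ or $-k$, hence by a multiple of $k$; since a peak lying at height $h'$ inside an abstract $k$-Dyck path $Q_j$ sits at height $h+h'$ once $Q_j$ is placed so as to start at height $h$, its height modulo $k$ is unaffected. Composing $i$ such maps changes each block's starting height by a multiple of $k$, which gives the claim. The work is to confirm that the intermediate paths again have the shape required to re-apply the lemma.

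First I would note that $\kappa$ preserves the class $\mathcal{P}^k_n$ together with the trailing factor $u\,d^{n}$: if $Q=Q_{0}^{(0)}uQ_{1}^{(1)}u\dots Q_{kn-1}^{(kn-1)}u\,d^{n}$ is the decomposition \eqref{eq:decomp-right-peak}, then $\kappa(Q)=Q_{\pi_n(0)}^{(0)}uQ_{\pi_n(1)}^{(1)}u\dots Q_{\pi_n(kn-1)}^{(kn-1)}u\,d^{n}$ still has a $u$-step immediately before $d^{n}$, so its maximal down-step suffix again has length exactly $n$. Hence every iterate $\kappa^{j}(Q)$ has this same shape, its blocks are $Q_0,\dots,Q_{kn-1}$ up to a permutation of their positions, and the phrase ``peak heights modulo $k$ in each block $Q_j$'' is meaningful all along the orbit $Q,\kappa(Q),\kappa^{2}(Q),\dots$. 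I would also remark that the computation in the proof of Lemma \ref{lem:cyclic-peaks} goes through verbatim with the height $1$ replaced by any height $j\ge 1$: it shows that $\lambda$ sends $Q^{(j)}$ to $(\kappa(Q))^{(j-1)}$, and that in this map block $Q_m$ moves from starting height $m+j$ to starting height $\pi_n^{-1}(m)+(j-1)$, where $\pi_n^{-1}(m)\in\{m+1,\ m-k+1\}$, so the starting height changes by $0$ or $-k$ and peak heights modulo $k$ in each $Q_m$ are preserved.

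For the inductive step I would write $\kappa^{i}(Q)=\kappa^{i-1}(\kappa(Q))$ and factor the map as
\[
Q^{(i)}\;\xmapsto{\ \lambda\ }\;(\kappa(Q))^{(i-1)}\;\xmapsto{\ \lambda^{i-1}\ }\;\kappa^{i-1}(\kappa(Q))=\kappa^{i}(Q).
\]
The first arrow preserves peak heights modulo $k$ in each $Q_j$ by the generalized lemma computation of the previous paragraph (applied with $j=i$). For the second arrow, $\kappa(Q)$ is a $k$-Dyck path with maximal down-step suffix of length $n$, so the inductive hypothesis applies to it and says that $\lambda^{i-1}$ preserves peak heights modulo $k$ in each block of $\kappa(Q)$, i.e.\ in each $Q_j$. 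Composing the two arrows completes the induction, and with it the proof.

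The only delicate point is the bookkeeping: one must track each block $Q_j$ through the position-permutation $\pi_n$ at every step and check that the accumulated change in its starting height remains a multiple of $k$. Since a single step contributes $0$ or $-k$ and merely permutes the set of blocks, this is routine — consistent with the statement being ``an easy corollary'' of Lemma \ref{lem:cyclic-peaks}.
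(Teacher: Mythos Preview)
Your proposal is correct and follows exactly the approach the paper intends: the paper's ``proof'' is simply the sentence ``Applying cyclic lowering several times, we obtain an easy corollary of Lemma~\ref{lem:cyclic-peaks},'' and you have written out precisely that iteration, including the (necessary) observations that $\kappa$ preserves the maximal down-step suffix length and that the height computation of the lemma works from any starting height $j\ge 1$.  There is nothing to add or correct.
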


We will call $\lambda^i(Q^{(i)})=\kappa^i(Q)$ the \emph{cyclic $i$-lowering} of $Q^{(i)}$. Note that $\kappa^i(Q)$ permutes blocks $Q_j$ as follows:
\[
Q_j^{(j)}\mapsto 
\begin{cases}
Q_{j+k-i}^{(j)}, & \text{if} \quad j\equiv 0,1,\dots,i-1\!\!\!\pmod k,\\[4pt]
Q_{j-i}^{(j)}, & \text{if} \quad j\equiv i,i+1,\dots,k-1\!\!\!\pmod k.  
\end{cases}
\]

\begin{theorem} \label{thm:path-tree}
The statistics $(\pk_0,\pk_1,\dots,\pk_{k-2},\pk_{k-1},\dd)$ on $\mathcal{P}^k_n$ and $(e_1,e_2,\dots,e_{k+1})$ on $\mathcal{T}^{k+1}_n$ are equidistributed.
\end{theorem}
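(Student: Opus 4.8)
The plan is to exhibit a bijection $\Phi\colon\mathcal{P}^k_n\to\mathcal{T}^{k+1}_n$ with $e_{i+1}(\Phi(P))=\pk_i(P)$ for $0\le i\le k-1$ and $e_{k+1}(\Phi(P))=\dd(P)$, defined recursively by pairing the decomposition \eqref{eq:decomp-right-peak} on the path side with the dual decomposition of $(k+1)$-ary trees along their ``rightmost spine''. Concretely, a nonempty $T\in\mathcal{T}^{k+1}_n$ has a unique chain of nodes $r_0$ (the root), $r_1=$ the $(k+1)$st child of $r_0$, \dots, $r_{\ell-1}$, where $r_{\ell-1}$ is the first node on this chain with no $(k+1)$st child ($\ell\ge1$); $T$ is then encoded by $\ell$ together with the family $(T_{t,s})_{0\le t\le\ell-1,\,1\le s\le k}$, where $T_{t,s}$ is the (possibly empty) subtree rooted at the $s$th child of $r_t$, with $\sum_{t,s}|T_{t,s}|=n-\ell$. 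I will set $\Phi(\emptyset)=\emptyset$ (admitting the empty tree of size $0$ for the recursion) and, given the decomposition $P=Q_0^{(0)}uQ_1^{(1)}u\dots Q_{k\ell-1}^{(k\ell-1)}u\,d^{\ell}$ of a nonempty $P$,
\[
\Phi(P)=\Bigl(\ell;\,\bigl(\Phi(\kappa^{s-1}(Q_{kt+s-1}))\bigr)_{0\le t\le\ell-1,\,1\le s\le k}\Bigr),
\]
that is, the block $Q_j$, grafted into $P$ at height $j$, is first cyclically $(j\bmod k)$-lowered (by $\kappa^{\,j\bmod k}$, equivalently $\lambda^{\,j\bmod k}$) and then turned recursively into the subtree hanging at position $(j\bmod k)+1$ below $r_{\lfloor j/k\rfloor}$. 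That $\Phi$ is a bijection follows by induction on $n$: both \eqref{eq:decomp-right-peak} and the tree decomposition above are bijections onto the same indexing data ($\ell\ge1$ together with a sequence of $k\ell$ objects of total down-size $n-\ell$); each $\kappa^{c}$ is a size-preserving bijection of $\mathcal{P}^k_m$ (since $\kappa$ is one and $\kappa^k=\id$); and every block $Q_{kt+s-1}$ has down-size at most $n-\ell<n$, so $\Phi$ is already bijective on it by induction. Compose.

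Next I would verify by induction that $\Phi$ transports the statistics. From the decomposition $(\ell;(Q_j))$ one reads off $\dd(P)=(\ell-1)+\sum_j\dd(Q_j)$ (the suffix contributes $\ell-1$ double descents and no $dd$ straddles a block boundary) and, writing $\widetilde{\pk}_m(Q)$ for the number of \emph{all} peaks of $Q$ at height $\equiv m\bmod k$ (including the rightmost peak of $Q$), $\pk_i(P)=\sum_j\widetilde{\pk}_{(i-j)\bmod k}(Q_j)$: no peak of $P$ straddles a block boundary, the only peak outside all blocks is the rightmost peak of $P$ (excluded from $\pk$), and a peak at relative height $h$ inside $Q_j$ sits at height $h+j$ in $P$. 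Since $\widetilde{\pk}_m(Q)=\pk_m(Q)+[\,Q\ne\emptyset\,]\,[\,m\equiv0\bmod k\,]$, and since Corollary \ref{cor:cyclic-peaks}, passed from blocks to non-rightmost peaks of the baseline-$0$ paths involved, becomes
\[
\pk_i\bigl(\kappa^{c}(Q)\bigr)=\pk_{(i-c)\bmod k}(Q)\qquad\text{and}\qquad\dd\bigl(\kappa^{c}(Q)\bigr)=\dd(Q),
\]
I can rewrite $\pk_i(P)$ and $\dd(P)$ entirely in terms of $\pk_\bullet$ and $\dd$ of the lowered blocks $\kappa^{s-1}(Q_{kt+s-1})$. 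On the tree side, grouping edges by position gives $e_{k+1}(\Phi(P))=(\ell-1)+\sum_{t,s}e_{k+1}(T_{t,s})$ and, for $1\le s\le k$, $e_s(\Phi(P))=\#\{t:T_{t,s}\ne\emptyset\}+\sum_{t,s'}e_s(T_{t,s'})$; feeding in the inductive identities $e_{i+1}(T_{t,s'})=\pk_i(\kappa^{s'-1}(Q_{kt+s'-1}))$ and $e_{k+1}(T_{t,s'})=\dd(\kappa^{s'-1}(Q_{kt+s'-1}))$ and matching the two sides term by term (using that $(s-s')\bmod k=0$ forces $s=s'$) closes the induction.

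The main obstacle is exactly this final matching, in two respects. First, the exponent of $\kappa$ must be $s-1=j\bmod k$ and no other: cyclic $(s-1)$-lowering raises non-rightmost peak heights by $s-1$ modulo $k$, which is precisely what cancels the height $j\bmod k$ at which $Q_j$ is inserted into $P$; any other exponent breaks the correspondence. Second, the rightmost peaks must be tracked carefully, since the rightmost peak of each \emph{nonempty} block $Q_j$ — not counted in $\pk(Q_j)$ — \emph{is} a non-rightmost peak of $P$ and contributes to the relevant $\pk_i(P)$; this is the source of the $[\,Q_j\ne\emptyset\,]$ terms hidden in $\widetilde{\pk}$ on the path side and of the $\#\{t:T_{t,s}\ne\emptyset\}$ terms (the spine-to-subtree edges) on the tree side, and the argument succeeds only once these are seen to coincide. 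Everything else is routine.
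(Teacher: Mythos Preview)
Your proposal is correct, and the bijection $\Phi$ you build is in fact the paper's map $\psi$ seen from a different angle. The paper decomposes a nonempty $P$ one level at a time as $P=P_0\,u\,P_1^{(1)}\,u\cdots u\,P_k^{(k)}\,d$ (last down-step peeled off, then last visits to heights $0,\dots,k-1$), and sets $\psi(P)=(\psi(P_0),\psi(\kappa(P_1)),\dots,\psi(\kappa^{k-1}(P_{k-1})),\psi(P_k))$; iterating this along the $(k+1)$st subtree exactly reproduces your rightmost-spine picture, since $P_j=Q_j$ for $0\le j\le k-1$ and $P_k=Q_k\,u\,Q_{k+1}\,u\cdots u\,Q_{k\ell-1}\,u\,d^{\ell-1}$. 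So you have unrolled the paper's recursion rather than found a genuinely different bijection.

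Where the two write-ups differ is in the verification. The paper argues by labeling: it names each non-rightmost peak and each double descent of $P$ and follows the labels through the recursion, using Corollary~\ref{cor:cyclic-peaks} only to say that cyclic lowering does not change the label type. You instead compute the statistics algebraically, expanding $\pk_i(P)$ via $\widetilde{\pk}$ over the blocks $Q_j$ and matching the resulting $[Q_j\ne\emptyset]$ corrections against the spine-edge counts $\#\{t:T_{t,s}\ne\emptyset\}$ on the tree side. Your route makes the index bookkeeping (in particular the identity $\pk_i(\kappa^c(Q))=\pk_{(i-c)\bmod k}(Q)$) fully explicit, at the cost of a heavier notation; the paper's labeling hides that computation inside the recursive transport of labels. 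Both are complete.
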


\begin{proof}
Any path $P\in\mathcal{P}^{k}_n$ has a unique decomposition as
\begin{equation} \label{eq:ukd-decomp}
P=P_0^{(0)}uP_1^{(1)}u\dots P_{k-1}^{(k-1)}uP_k^{(k)}d,
\end{equation}
where each $P_i$, $0\le i\le k$, is a $k$-Dyck path. Let $n_i$ be the down-size of $P_i$, then $\sum_{i=0}^{k}{n_i}=n-1$. 

Let $\mathcal{P}^k=\cup_{n\ge 0}{\mathcal{P}^k_n}$ and $\mathcal{T}^{k+1}=\cup_{n\ge 0}{\mathcal{T}^{k+1}_n}$. To prove the theorem, we will give a bijection $\psi:\mathcal{P}^k\to\mathcal{T}^{k+1}$ that maps peaks at heights $i$ modulo $k$ ($i=0,1,\dots,k-1$) of a $k$-Dyck path $P$ of down-size $n$ to $(i+1)$-st children of a $(k+1)$-ary tree $\psi(P)$ on $n$ vertices, and maps double descents of $P$ onto $(k+1)$-st (i.e.\ rightmost) children in $\psi(P)$.

To do this, first label the peaks and double descents of $P$ (i.e.\ the left endpoints of all down-steps of $P$) from left to right as follows:
\begin{itemize}

\item label the rightmost peak by $r$,

\item label the left endpoint of a down-step $i_j$ if it is the $j$-th non-rightmost peak at height $i$ modulo $k$, where $0\le i\le k-1$, and 

\item label the left endpoint of a down-step $d_j$ if it is the $j$-th double descent of $P$.

\end{itemize}

For a tree $T\in\mathcal{T}^{k+1}$, let $R$ be its root label, $T_i$ be its $i$-th subtree, and write $T=R(T_1,T_2,\dots,T_{k+1})$. Then define the map $\psi$ as follows: let $\psi(\emptyset)=\emptyset$ (i.e.\ the one-point path corresponds to the empty tree), and for $P\ne\emptyset$, define $\psi(P)$ recursively so that $R=r$ and, for $i=0,1,\dots,k$, the $(i+1)$-st subtree of $\psi(P)$ is $\psi(P)_{i+1}=\psi(\kappa^i(P_i))$. In other words,
\begin{equation} \label{eq:path-tree}
\psi(P) = r\left(\psi(P_0),\psi(\kappa(P_1)),\psi(\kappa^2(P_2)),\dots,\psi(\kappa^{k-1}(P_{k-1})),\psi(\kappa^k(P_k)\right),
\end{equation}
and we recall that $\kappa^k(P_k)=P_k$. As the root of $\psi(P)$ is already labeled with $r$, label the rest of the vertices of $\psi(P)$ recursively as follows:
\begin{itemize}

\item for $i=0,1,\dots,k-1$, label the $(i+1)$-st child of the root of $\psi(P)$ (i.e.\ the root of $\psi(\kappa^i(P_i))$) with the label of the rightmost peak of $P_i^{(i)}$ (which is at height $i$ modulo $k$).

\item label the $(k+1)$-st (i.e.\ the rightmost) child of the root of $\psi(P)$ (i.e.\ the root of $\psi(P_k)$) with the label of the right endpoint of $P_k^{(k)}$ (which is a double descent if $P_k\ne\emptyset$, since it is preceded and followed by a down-step). Recall that the rightmost peak of $P_k^{(k)}$ is also the rightmost peak of $P$, and hence its label has already been used for the root of $\psi(P)$.

\end{itemize}

Now, proceeding inductively for each $i=0,1,\dots,k$, label the remaining (nonroot) vertices of each $\psi(\kappa^i(P_i))$  with the labels of the corresponding non-rightmost peaks or double descents of $P_i^{(i)}$. Note that the cyclic $i$-lowering $\lambda^i$ may permute positions of peaks and double descents of $P_i^{(i)}$ but preserves peak heights modulo $k$. Therefore, all $(i+1)$-st children in $\psi(P)$ are labeled either $i_j$ (for some $j$) if $0\le i\le k-1$, or $d_j$ (for some $j$) if $i=k$ (see Figure \ref{fig:map-path-tree} and Example \ref{ex:map-path-tree}).

In particular, if $P$ has down-size $n$, then $\psi(P)$ has $n$ vertices. Moreover,
\[
(\pk_0,\pk_1,\dots,\pk_{k-1},\dd)(P)=(e_1,e_2,\dots,e_{k+1})(\psi(P)). \qedhere
\]
\end{proof}

Finally, we see that Proposition \ref{prop:trees} and Theorem \ref{thm:path-tree} together imply Theorem \ref{thm:peak-dist}.

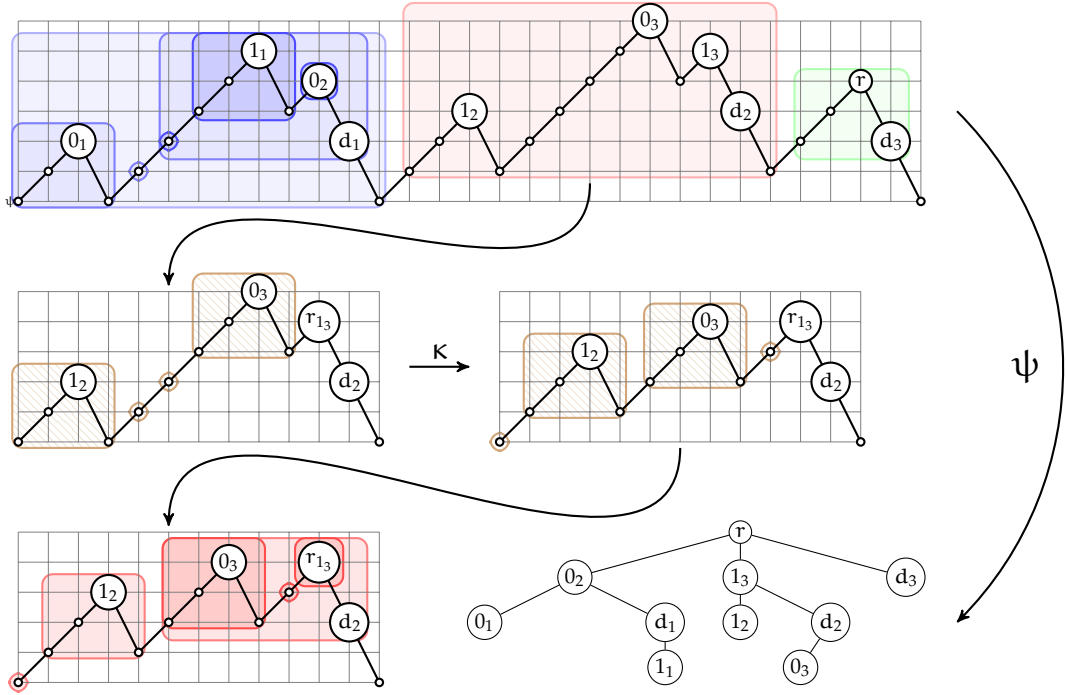
\begin{figure}[t] 
\begin{center}
{\scriptsize
\begin{tikzpicture}[scale=0.36]
\begin{scope}[xshift=0cm,yshift=0cm,local bounding box=bigpath]
\draw[thick,rounded corners, blue!30,fill=blue!5] (0,0) +(-0.2,-0.2) rectangle +(12.2,5.6);
\draw[thick,rounded corners, red!30,fill=red!5] (13,1) +(-0.2,-0.2) rectangle +(12.2,5.6);
\draw[thick,rounded corners, green!30,fill=green!5] (26,2) +(-0.2,-0.6) rectangle +(3.6,2.4);

\draw[thick,rounded corners, blue!50,fill=blue!10] (0,0) +(-0.2,-0.2) rectangle +(3.2,2.6);
\draw[thick,rounded corners, blue!50,fill=blue!10] (4,1) +(-0.3,-0.3) rectangle +(0.3,0.3);
\draw[thick,rounded corners, blue!50,fill=blue!10] (5,2) +(-0.3,-0.6) rectangle +(6.6,3.6);

\draw[thick,rounded corners, blue!70,fill=blue!20] (5,2) +(-0.3,-0.3) rectangle +(0.3,0.3);
\draw[thick,rounded corners, blue!70,fill=blue!20] (6,3) +(-0.2,-0.3) rectangle +(3.2,2.6);
\draw[thick,rounded corners, blue!70,fill=blue!20] (10,4) +(-0.6,-0.6) rectangle +(0.6,0.6);

\draw[help lines] (0,0) grid (30,6);
\draw[thick] (0,0) 
\vs{} \us \vs{} \us \vs{0_1} \ds  \vs{} \us \vs{} \us \vs{} \us \vs{} \us \vs{} \us \vs{1_1} \ds \vs{} \us \vs{0_2} \ds \vs{d_1} \ds \vs{}
\us
\vs{} \us \vs{} \us \vs{1_2} \ds  \vs{} \us \vs{} \us \vs{} \us \vs{} \us \vs{} \us \vs{0_3} \ds \vs{} \us \vs{1_3} \ds \vs{d_2} \ds \vs{}
\us
\vs{} \us \vs{} \us \vs{r} \ds \vs{d_3}
\ds \vs{}
;
\end{scope}

\draw [thick,out=270,in=90,->,>=stealth'] (19,0.6) to (5,-2.8);

\begin{scope}[xshift=0cm,yshift=-8cm, local bounding box=midpath]

\draw[thick,rounded corners, brown!70,fill=brown!20, pattern=north west lines, pattern color=brown!20] (0,0) +(-0.2,-0.2) rectangle +(3.2,2.6);
\draw[thick,rounded corners, brown!70,fill=brown!20, pattern=north west lines, pattern color=brown!20] (4,1) +(-0.3,-0.3) rectangle +(0.3,0.3);
\draw[thick,rounded corners, brown!70,fill=brown!20, pattern=north west lines, pattern color=brown!20] (5,2) +(-0.3,-0.3) rectangle +(0.3,0.3);
\draw[thick,rounded corners, brown!70,fill=brown!20, pattern=north west lines, pattern color=brown!20] (6,3) +(-0.2,-0.2) rectangle +(3.2,2.6);

\draw[help lines] (0,0) grid (12,5);
\draw[thick] (0,0)
\vs{} \us \vs{} \us \vs{1_2} \ds  \vs{} \us \vs{} \us \vs{} \us \vs{} \us \vs{} \us \vs{0_3} \ds \vs{} \us \vs{r_{1_3}} \ds \vs{d_2} \ds \vs{};

\end{scope}

\draw[thick,->,>=stealth'] (13,-5.5) -- (15,-5.5) node[draw=none,fill=none,font=\small,midway,above] {\large $\kappa$}
;

\begin{scope}[xshift=16cm,yshift=-8cm, local bounding box=midpath1]

\draw[thick,rounded corners, brown!70,fill=brown!20, pattern=north west lines, pattern color=brown!20] (0,0) +(-0.3,-0.3) rectangle +(0.3,0.3);
\draw[thick,rounded corners, brown!70,fill=brown!20, pattern=north west lines, pattern color=brown!20] (1,1) +(-0.2,-0.2) rectangle +(3.2,2.6);
\draw[thick,rounded corners, brown!70,fill=brown!20, pattern=north west lines, pattern color=brown!20] (5,2) +(-0.2,-0.2) rectangle +(3.2,2.6);
\draw[thick,rounded corners, brown!70,fill=brown!20, pattern=north west lines, pattern color=brown!20] (9,3) +(-0.3,-0.3) rectangle +(0.3,0.3);

\draw[help lines] (0,0) grid (12,5);
\draw[thick] (0,0)
\vs{} \us \vs{} \us \vs{} \us \vs{1_2} \ds \vs{} \us \vs{} \us \vs{} \us \vs{0_3} \ds \vs{} \us \vs{} \us \vs{r_{1_3}} \ds \vs{d_2} \ds \vs{};

\end{scope}

\draw [thick,out=270,in=90,->,>=stealth'] (22,-8.2) to (5,-10.8);

\begin{scope}[xshift=0cm,yshift=-16cm, local bounding box=midpath2]

\draw[thick,rounded corners, red!50,fill=red!10] (0,0) +(-0.3,-0.3) rectangle +(0.3,0.3);
\draw[thick,rounded corners, red!50,fill=red!10] (1,1) +(-0.2,-0.2) rectangle +(3.2,2.6);
\draw[thick,rounded corners, red!50,fill=red!10] (5,2) +(-0.2,-0.6) rectangle +(6.6,2.8);

\draw[thick,rounded corners, red!70,fill=red!20] (5,2) +(-0.2,-0.2) rectangle +(3.2,2.8);
\draw[thick,rounded corners, red!70,fill=red!20] (9,3) +(-0.3,-0.3) rectangle +(0.3,0.3);
\draw[thick,rounded corners, red!70,fill=red!20] (10,4) +(-0.8,-0.8) rectangle +(0.8,0.8);

\draw[help lines] (0,0) grid (12,5);
\draw[thick] (0,0)
\vs{} \us \vs{} \us \vs{} \us \vs{1_2} \ds \vs{} \us \vs{} \us \vs{} \us \vs{0_3} \ds \vs{} \us \vs{} \us \vs{r_{1_3}} \ds \vs{d_2} \ds \vs{};

\end{scope}

\begin{scope}[xshift=24cm,yshift=-11cm, 
local bounding box=tree,
every node/.style={circle,draw,inner sep=1pt},
level 1/.style={sibling distance=5.5cm},
level 2/.style={sibling distance=3cm},
level 3/.style={sibling distance=1cm}
]

\node (r) {$r$}
      child { node (02) {$0_2$}
                  child { node (01) {$0_1$} }
                  child { node[draw=none] (none-l-m) {} edge from parent[draw=none] }
                  child { node (d1) {$d_1$} 
                        child { node[draw=none] (none-l-r-l) {} edge from parent[draw=none] }
                        child { node (11) {$1_1$} }
                        child { node[draw=none] (none-l-r-r) {} edge from parent[draw=none] }
                        }
            }
      child { node (13) {$1_3$} 
                  child { edge from parent[draw=none] node[draw=none] (none-m-l) {} }
                  child { node (12) {$1_2$} }
                  child { node (d2) {$d_2$} 
                        child { node (03) {$0_3$} }
                        child { node[draw=none] (none-m-r-m) {} edge from parent[draw=none] }
                        child { node[draw=none] (none-m-r-r) {} edge from parent[draw=none] }
                        }
            }
      child { node (d3) {$d_3$} };
\end{scope}

\draw[thick,->,>=stealth'] (31.2,3) to[bend left=45] (31.2,-14) node[draw=none,fill=none,font=\small,midway,left] {\large $\psi$}
;

\node at (33.5,-5.5) {\Large $\psi$};

\end{tikzpicture}
}
\end{center}
\caption{An example of the application of map $\psi$, together with the corresponding mapping of the peaks and double descents of a $2$-Dyck path $P$ to the nodes of the ternary tree $\psi(P)$.}  
\label{fig:map-path-tree}
\end{figure}

\medskip

\begin{example} \label{ex:map-path-tree}
Figure \ref{fig:map-path-tree} gives an example of the application of map $\psi$ of Theorem \ref{thm:path-tree}, together with the corresponding mapping of the peaks and double descents of a path $P$ to the nodes of the corresponding tree $\psi(P)$. We label the peaks of $P$ as described in Theorem~\ref{thm:path-tree}.  In our example, $k=2$, $n=10$, and
\[
P=\underbrace{uuduuuuududd}_{P_0^{\phantom{(0)}}}u\underbrace{uuduuuuududd}_{P_1^{(1)}}u\underbrace{uud}_{P_2^{(2)}}d.
\]
Then $\kappa^2=\mathrm{id}$ and $\psi(P)=\left(\psi(P_0),\psi(\kappa(P_1)),\psi(\kappa^2(P_2))\right)=\left(\psi(P_0),\psi(\kappa(P_1)),\psi(P_2)\right)$.
For $P_1=uuduuuuududd$, we use the other decomposition,
\[
P_1=\underbrace{uud}_{P_{1,0}^{\phantom{(0)}}}u\underbrace{\emptyset}_{P_{1,1}^{(1)}}u\underbrace{\emptyset}_{P_{1,2}^{(2)}}u\underbrace{uud}_{P_{1,3}^{(3)}}udd.
\]
Since the $ud\dots d$ suffix of $P_1$ contains 2 $d$'s, we have that $\kappa$ permutes the blocks $P_{1,j}$, $j=0,1,2,3$, according to $\pi_2=(10)(32)$, i.e.
\[
\kappa(P_1)=\underbrace{\emptyset}_{P_{1,1}^{(0)}}u\underbrace{uud}_{P_{1,0}^{(1)}}u\underbrace{uud}_{P_{1,3}^{(2)}}u\underbrace{\emptyset}_{P_{1,2}^{(3)}}udd
=uuuduuuduudd.
\]
Note also that $P_0=P_1$ in this example, but the subtrees $\psi(P)_1=\psi(P_0)$ and $\psi(P)_2=\psi(\kappa(P_1))$ of $\psi(P)$ corresponding to $P_0$ and $P_1$, respectively, are not equal as ordered trees.
\end{example}

\medskip

As a direct consequence of Theorem \ref{thm:path-tree}, we have the following corollary.

\begin{corollary} \label{cor:peak-dr-rt}
The statistics $\pk_0$, $\pk_1$, \dots, $\pk_{k-1}$, $\dd$ are equidistributed on $\mathcal{P}^k_n$ for $n\ge 0$.
\end{corollary}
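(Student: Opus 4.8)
The plan is to read the corollary off directly from Theorem~\ref{thm:path-tree} together with Proposition~\ref{prop:trees}. First I would apply Theorem~\ref{thm:path-tree}: it asserts that the vector-valued statistic $(\pk_0,\pk_1,\dots,\pk_{k-1},\dd)$ on $\mathcal{P}^k_n$ is equidistributed with $(e_1,e_2,\dots,e_{k+1})$ on $\mathcal{T}^{k+1}_n$, under the identification $\pk_i\leftrightarrow e_{i+1}$ for $0\le i\le k-1$ and $\dd\leftrightarrow e_{k+1}$. Reading one coordinate at a time, this already gives that $\pk_i$ is equidistributed with $e_{i+1}$ on the respective sets for each $i$, and that $\dd$ is equidistributed with $e_{k+1}$.

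Next I would invoke Proposition~\ref{prop:trees} with $m=k+1$. Since $(e_1,\dots,e_{k+1})$ on $\mathcal{T}^{k+1}_n$ is jointly equidistributed with every permutation of itself, applying the transposition swapping coordinates $a$ and $b$ shows that the univariate statistics $e_a$ and $e_b$ have the same distribution on $\mathcal{T}^{k+1}_n$; hence all of $e_1,\dots,e_{k+1}$ share a single common distribution. Transporting this back through the correspondence above, all of $\pk_0,\dots,\pk_{k-1},\dd$ share that same distribution on $\mathcal{P}^k_n$, which is exactly the claim. (Equivalently, the statement is just a one-coordinate marginal of the fully symmetric joint distribution in Theorem~\ref{thm:peak-dist}, so no separate argument is strictly required.)

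No step here is a genuine obstacle: all the content sits in the already-established Theorem~\ref{thm:path-tree} and Proposition~\ref{prop:trees}, and the only things to check are the index bookkeeping in the path-to-tree correspondence and the elementary fact that invariance of a joint distribution under all coordinate permutations forces all one-dimensional marginals to agree. Accordingly, I would keep the written proof to a single sentence citing these two results.
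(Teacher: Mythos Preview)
Your proposal is correct and is exactly the paper's approach: the paper records the corollary as a direct consequence of Theorem~\ref{thm:path-tree} (together with the symmetry from Proposition~\ref{prop:trees}, or equivalently as a one-coordinate marginal of Theorem~\ref{thm:peak-dist}), with no additional argument given. Your index bookkeeping and your observation that permutation-invariance of the joint law forces equality of all one-dimensional marginals are precisely the routine checks implicit in the paper's one-line derivation.
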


Moreover, the fact that $\dd(P)=n-1-\pk(P)$ for $P\in\mathcal{P}^k_n$, $n\ge 1$, implies that the distribution of each of $\pk_0$, $\pk_1$, \dots, $\pk_{k-1}$, $\dd$ on $\mathcal{P}^k_n$ is the reversed distribution of $\pk$ on $\mathcal{P}^k_n$.

\begin{example} \label{ex:1-dyck}
Letting $k=1$, we obtain that $(\dd,\pk)$ and $(\pk,\dd)$ are equidistributed on Dyck paths. In fact, the well-known bijection due to Deutsch \cite{Deutsch99}, given by $\eta(\emptyset)=\emptyset$, $\eta(P_0uP_1d)\mapsto \eta(P_1)u\eta(P_0)d$ on Dyck paths (more precisely, an involution) interchanges the $\dd$ and $\pk$ statistics. Since the distribution of $\pk$ is given by the Narayana numbers $N(n,r)=\frac{1}{n}\binom{n}{r}\binom{n}{r-1}$, $1\le r\le n$ (see \href{http://oeis.org/A001263}{A001263} \cite{OEIS}), this yields (yet another) bijective proof that $N(n,r)=N(n,n-r)$.
\end{example}

\begin{example} \label{ex:2-dyck}
Letting $k=2$, we see that $\dd$, $\pk_1$ (odd-height non-rightmost peaks), and $\pk_0$ (even-height non-rightmost peaks) are equidistributed on 2-Dyck paths, also known as \emph{ternary paths} (see Figure \ref{fig:ternary}). The distribution of each of $\pk_0$, $\pk_1$, and $\dd$ is given by \href{https://oeis.org/A120986}{A120986} \cite{OEIS}, and the distribution of $\pk=\pk_0+\pk_1=n-1-\dd$ is given by \href{https://oeis.org/A108767}{A108767} \cite{OEIS}. 
\end{example}

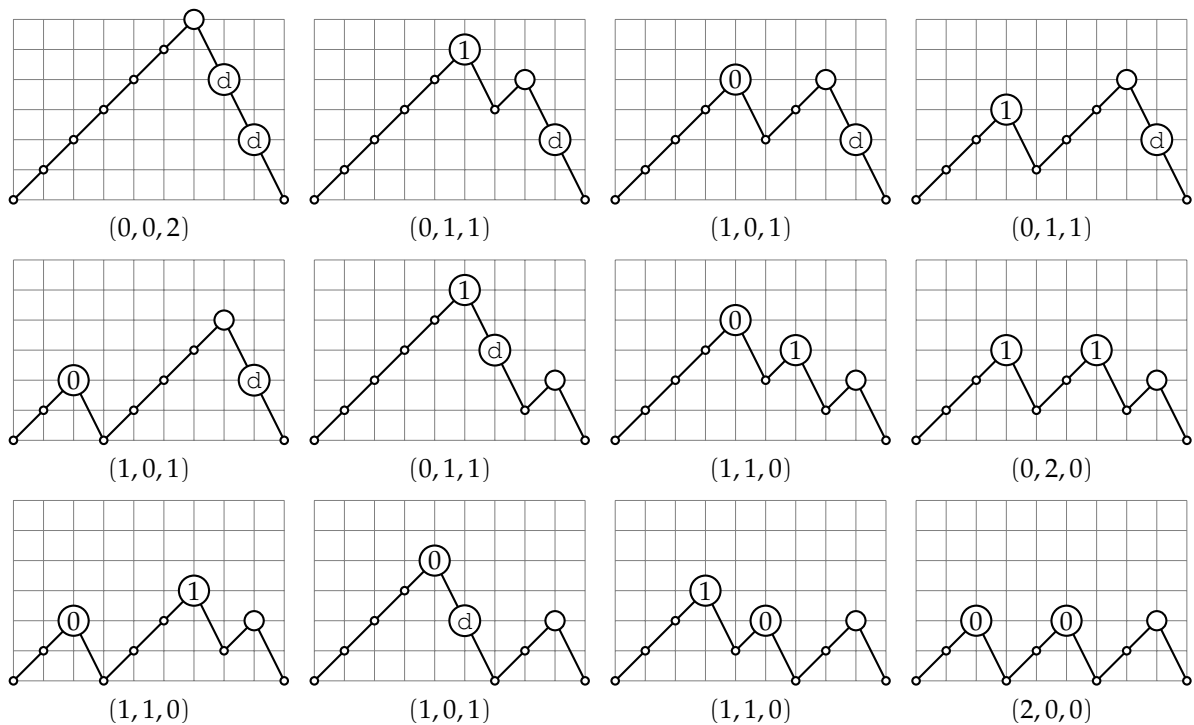
\begin{figure}[t] 
\begin{center}
\begin{tikzpicture}[scale=0.36]
\tikzset{every node}=[font=\small]
\begin{scope}[xshift=0cm,yshift=0cm]
\draw[help lines] (0,0) grid (9,6);
\draw[thick] (0,0) \vs{} \us \vs{} \us \vs{} \us \vs{} \us \vs{} \us \vs{} \us \vs{\ \ } \ds \vs{\mathtt{d}} \ds \vs{\mathtt{d}} \ds \vs{};
\node at (4.5,-1) {$(0,0,2)$};
\end{scope}
\begin{scope}[xshift=10cm,yshift=0cm]
\draw[help lines] (0,0) grid (9,6);
\draw[thick] (0,0) \vs{} \us \vs{} \us \vs{} \us \vs{} \us \vs{} \us \vs{1} \ds \vs{} \us \vs{\ \ } \ds \vs{\mathtt{d}} \ds \vs{};
\node at (4.5,-1) {$(0,1,1)$};
\end{scope}
\begin{scope}[xshift=20cm,yshift=0cm]
\draw[help lines] (0,0) grid (9,6);
\draw[thick] (0,0) \vs{} \us \vs{} \us \vs{} \us \vs{} \us \vs{0} \ds \vs{} \us \vs{} \us \vs{\ \ } \ds \vs{\mathtt{d}} \ds \vs{};
\node at (4.5,-1) {$(1,0,1)$};
\end{scope}
\begin{scope}[xshift=30cm,yshift=0cm]
\draw[help lines] (0,0) grid (9,6);
\draw[thick] (0,0) \vs{} \us \vs{} \us \vs{} \us \vs{1} \ds \vs{} \us \vs{} \us \vs{} \us \vs{\ \ } \ds \vs{\mathtt{d}} \ds \vs{};
\node at (4.5,-1) {$(0,1,1)$};
\end{scope}
\begin{scope}[xshift=0cm,yshift=-8cm]
\draw[help lines] (0,0) grid (9,6);
\draw[thick] (0,0) \vs{} \us \vs{} \us \vs{0} \ds \vs{} \us \vs{} \us \vs{} \us \vs{} \us \vs{\ \ } \ds \vs{\mathtt{d}} \ds \vs{};
\node at (4.5,-1) {$(1,0,1)$};
\end{scope}
\begin{scope}[xshift=10cm,yshift=-8cm]
\draw[help lines] (0,0) grid (9,6);
\draw[thick] (0,0) \vs{} \us \vs{} \us \vs{} \us \vs{} \us \vs{} \us \vs{1} \ds \vs{\mathtt{d}} \ds \vs{} \us \vs{\ \ } \ds \vs{};
\node at (4.5,-1) {$(0,1,1)$};
\end{scope}
\begin{scope}[xshift=20cm,yshift=-8cm]
\draw[help lines] (0,0) grid (9,6);
\draw[thick] (0,0) \vs{} \us \vs{} \us \vs{} \us \vs{} \us \vs{0} \ds \vs{} \us \vs{1} \ds \vs{} \us \vs{\ \ } \ds \vs{};
\node at (4.5,-1) {$(1,1,0)$};
\end{scope}
\begin{scope}[xshift=30cm,yshift=-8cm]
\draw[help lines] (0,0) grid (9,6);
\draw[thick] (0,0) \vs{} \us \vs{} \us \vs{} \us \vs{1} \ds \vs{} \us \vs{} \us \vs{1} \ds \vs{} \us \vs{\ \ } \ds \vs{};
\node at (4.5,-1) {$(0,2,0)$};
\end{scope}
\begin{scope}[xshift=0cm,yshift=-16cm]
\draw[help lines] (0,0) grid (9,6);
\draw[thick] (0,0) \vs{} \us \vs{} \us \vs{0} \ds \vs{} \us \vs{} \us \vs{} \us \vs{1} \ds \vs{} \us \vs{\ \ } \ds \vs{};
\node at (4.5,-1) {$(1,1,0)$};
\end{scope}
\begin{scope}[xshift=10cm,yshift=-16cm]
\draw[help lines] (0,0) grid (9,6);
\draw[thick] (0,0) \vs{} \us \vs{} \us \vs{} \us \vs{} \us \vs{0} \ds \vs{\mathtt{d}} \ds \vs{} \us \vs{} \us \vs{\ \ } \ds \vs{};
\node at (4.5,-1) {$(1,0,1)$};
\end{scope}
\begin{scope}[xshift=20cm,yshift=-16cm]
\draw[help lines] (0,0) grid (9,6);
\draw[thick] (0,0) \vs{} \us \vs{} \us \vs{} \us \vs{1} \ds \vs{} \us \vs{0} \ds \vs{} \us \vs{} \us \vs{\ \ } \ds \vs{};
\node at (4.5,-1) {$(1,1,0)$};
\end{scope}
\begin{scope}[xshift=30cm,yshift=-16cm]
\draw[help lines] (0,0) grid (9,6);
\draw[thick] (0,0) \vs{} \us \vs{} \us \vs{0} \ds \vs{} \us \vs{} \us \vs{0} \ds \vs{} \us \vs{} \us \vs{\ \ } \ds \vs{};
\node at (4.5,-1) {$(2,0,0)$};
\end{scope}
\end{tikzpicture}
\caption{Peak heights modulo 2 and double descents in 2-Dyck paths of down-size 3. A non-rightmost peak at height $i\!\negthickspace\mod2$ ($i\in\{0,1\}$) is labeled with $i$. A double descent is labeled with $\mathtt{d}$. The rightmost peak is unlabeled. Below each path $P$ is the value of $(\pk_0,\pk_1,\dd)(P)$.}
\label{fig:ternary}
\end{center}
\end{figure}

The preceding results enable us now to calculate the joint distribution on $\mathcal{P}^k_n$ ($n\ge 1$) of the statistic $(\pk_0,\pk_1,\dots,\pk_{k-2},\pk_{k-1},\dd)$.

\begin{corollary} \label{cor:peak-dist}
The number of paths $P\in\mathcal{P}^k_n$ ($n\ge 1$) such that $(\pk_0,\pk_1,\dots,\pk_{k-1},\dd)(P)=(r_0,r_1,\dots,r_{k-1},r_k)$, where $\sum_{i=0}^{k}{r_i}=n-1$, is
\[
\frac{1}{n}\prod_{i=0}^{k}{\binom{n}{r_i}}.
\]
Moreover, for each statistic $\st\in\{\pk_0,\pk_1,\dots,\pk_{k-1},\dd\}$, the number of paths $P\in\mathcal{P}^k_n$ ($n\ge 1$) such that $\st(P)=r$ is
\[
\frac{1}{n}\binom{n}{r}\binom{kn}{n-1-r},
\]
whereas the number of paths $P\in\mathcal{P}^k_n$ ($n\ge 1$) such that $\pk(P)=r+1$ is
\[
\frac{1}{n}\binom{n}{n-1-r}\binom{kn}{r}=\frac{1}{n}\binom{n}{r+1}\binom{kn}{r},
\]
where $0\le r\le n-1$.
\end{corollary}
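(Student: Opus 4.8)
The plan is to push the entire computation onto $(k+1)$-ary trees using Theorem~\ref{thm:path-tree}: the bijection $\psi$ built in its proof identifies $(\pk_0,\dots,\pk_{k-1},\dd)$ on $\mathcal{P}^k_n$ with $(e_1,\dots,e_{k+1})$ on $\mathcal{T}^{k+1}_n$. Thus the first assertion is equivalent to the statement that the number of $T\in\mathcal{T}^{k+1}_n$ with $(e_1,\dots,e_{k+1})(T)=(r_0,\dots,r_k)$ equals $\tfrac1n\prod_{i=0}^{k}\binom n{r_i}$; here the constraint $\sum_i r_i=n-1$ is automatic, since $\sum_{i=1}^{k+1}e_i(T)$ is the number of edges of $T$, which is $n-1$. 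The two marginal formulas will then follow by summation.

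For the tree count, the shortest route is Lagrange inversion. Let $T=T(x;t_1,\dots,t_{k+1})$ be the generating function for $(k+1)$-ary trees with $x$ marking vertices and $t_i$ marking $i$-th edges. Decomposing a nonempty tree into its root together with its (possibly empty) $i$-th subtrees yields $T=x\prod_{i=1}^{k+1}(1+t_iT)$, i.e.\ $T=x\,\phi(T)$ with $\phi(u)=\prod_{i=1}^{k+1}(1+t_iu)$ and $\phi(0)=1\ne 0$. Lagrange inversion gives $[x^n]T=\tfrac1n[u^{n-1}]\,\phi(u)^n=\tfrac1n[u^{n-1}]\prod_{i=1}^{k+1}(1+t_iu)^n$, and extracting the coefficient of $t_1^{r_0}\cdots t_{k+1}^{r_k}$ (which forces total $u$-degree $\sum_i r_i=n-1$) produces exactly $\tfrac1n\prod_{i=0}^{k}\binom n{r_i}$. (A bijective alternative, closer in spirit to the rest of the paper: encode $T$ by its preorder traversal, recording at the $j$-th vertex the set $S_j\subseteq\{1,\dots,k+1\}$ of positions in which it has a child; the admissible words $S_1\cdots S_n$ are exactly those with $\sum_{l\le j}(|S_l|-1)\ge 0$ for all $j<n$, the ``content'' of such a word is an arbitrary tuple of subsets $R_1,\dots,R_{k+1}\subseteq[n]$ with $|R_i|=r_{i-1}$, and the Dvoretzky--Motzkin cycle lemma says that exactly one of the $n$ (pairwise distinct, by a periodicity count) cyclic rotations of such a tuple is admissible, so the number of admissible words of given content is $\tfrac1n\prod_i\binom n{r_{i-1}}$.)

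The marginal formulas follow by summing the joint formula. By Theorem~\ref{thm:peak-dist} (or simply because $\tfrac1n\prod_i\binom n{r_i}$ is symmetric in $r_0,\dots,r_k$), every statistic $\st\in\{\pk_0,\dots,\pk_{k-1},\dd\}$ has the same single-variable distribution on $\mathcal{P}^k_n$, so the number of $P$ with $\st(P)=r$ is $\sum_{s_1+\dots+s_k=n-1-r}\tfrac1n\binom nr\prod_{i=1}^k\binom n{s_i}=\tfrac1n\binom nr\binom{kn}{n-1-r}$, where I used the convolution identity $\sum_{s_1+\dots+s_k=m}\prod_{i=1}^k\binom n{s_i}=[y^m](1+y)^{kn}=\binom{kn}m$. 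Finally, the total number of peaks of $P$ (counting the rightmost one) is $\pk_0(P)+\dots+\pk_{k-1}(P)+1$, which equals $r+1$ precisely when $e_1+\dots+e_k=r$ on $\psi(P)$; summing the joint formula over all $(r_0,\dots,r_{k-1})$ with $r_0+\dots+r_{k-1}=r$ and $r_k=n-1-r$ gives, by the same identity, $\tfrac1n\binom n{n-1-r}\binom{kn}r=\tfrac1n\binom n{r+1}\binom{kn}r$.

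The only step with real content is the enumeration of $(k+1)$-ary trees by edge-position type; through Lagrange inversion this is a one-line coefficient extraction once the functional equation is in hand, and the main thing to be careful about is the bookkeeping of the index shift ($\pk_i\leftrightarrow e_{i+1}$, $\dd\leftrightarrow e_{k+1}$) and the off-by-one between peaks counted with and without the rightmost one. Everything else is elementary.
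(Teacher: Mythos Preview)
Your proof is correct and follows essentially the same route as the paper: both arguments use the bijection of Theorem~\ref{thm:path-tree} to obtain the functional equation $f=x\prod_{i=0}^{k}(1+q_if)$ and then apply Lagrange inversion to read off $\tfrac{1}{n}\prod_i\binom{n}{r_i}$. The only cosmetic differences are that the paper phrases the generating function on the path side rather than the tree side, and for the marginals the paper specializes all but one $q_i$ to $1$ before Lagrange inversion, whereas you sum the joint formula via Vandermonde; your parenthetical cycle-lemma argument is a pleasant bijective extra not present in the paper.
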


Note that $\sum_{i=0}^{k}{r_i}=n-1$ in Corollary \ref{cor:peak-dist} since a down-step must be preceded by an up-step or another down-step, so that $\sum_{i=0}^{k}{r_i}$ counts all the down-steps of $P$ except the one immediately after its rightmost peak.

\begin{proof}
Let $f=f(x;q_0,q_1,\dots,q_{k-1},q_k)$ be the generating function over the nonempty $k$-Dyck paths defined by
\begin{equation} \label{eq:path-func}
\begin{split}
f=f(x;q_0,q_1,\dots,q_{k-1},q_k)&=\sum_{n=1}^{\infty}\sum_{P\in\mathcal{P}^k_n}{x^n\left(\prod_{i=0}^{k-1}{q_i^{\pk_i(P)}}\right)q_k^{\dd(P)}}\\
&=\sum_{n=1}^{\infty}\sum_{T\in\mathcal{T}^{k+1}_n}{x^n\left(\prod_{i=0}^{k}{q_i^{e_{i+1}(T)}}\right)}.
\end{split}
\end{equation}
Then Theorem \ref{thm:path-tree} and its proof imply that
\begin{equation} \label{eq:path-func-rec}
f=x\prod_{i=0}^{k}(q_if+1).
\end{equation}
This functional equation follows from the decomposition \eqref{eq:ukd-decomp}. The $k$ up-steps and $1$ down-step not in any $P_i^{(i)}$, $i=0,1,\dots,k$, contribute the factor $x$, while $P_i^{(i)}$ (unless it is a single point) contributes the factor of $q_i$ due to its rightmost peak, which is at height $i\negthickspace\mod k$.

The number of paths $P\in\mathcal{P}^k_n$ such that $(\pk_0,\pk_1,\dots,\pk_{k-1},\dd)(P)=(r_0,r_1,\dots,r_{k-1},r_k)$ is the coefficient of $f$ at $x^n\prod_{i=0}^{k}{q_i^{r_i}}$. Applying Lagrange inversion (see e.g.\ \cite[Theorem 5.1.1]{Wilf}) to \eqref{eq:path-func-rec}, we get that
\[
\left[x^n\prod_{i=0}^{k}{q_i^{r_i}}\right]f=\frac{1}{n}\left[f^{n-1}\prod_{i=0}^{k}{q_i^{r_i}}\right]\left(\prod_{i=0}^{k}(q_if+1)\right)^n=\frac{1}{n}\prod_{i=0}^{k}{\binom{n}{r_i}},
\]
which is the number of paths we want to enumerate.

The second and third statement of the corollary can be derived similarly by setting all but one $q_i=1$ and noting that $\pk=n-1-\dd$ on $\mathcal{P}^k_n$.
\end{proof}

\section{Extensions and generalizations} \label{sec:ext}

\subsection{Paths with level steps} \label{subsec:level}

Theorem \ref{thm:peak-dist} can be extended to generalized colored Motzkin and Schr\"oder paths as follows. Let $A$ be a subset of positive integers, and let $c=(c_a)_{a\ge 1}$ be a sequence of nonnegative integers such that $c_a=0$ exactly when $a\notin A$. Consider a path $P$ from $(0,0)$ back to height 0 with unit steps $u=(1,1)$, $d=(1,-k)$, and if $A\ne\emptyset$, then $l_{a,b}=(a,0)$ of color $b$, where $1\le b\le c_a$, for all $a\in A$. Note that the case $A=\emptyset$ means that we exclude all level steps so as to generate the $k$-Dyck paths considered in Section~\ref{sec:main}. We will call such paths \emph{$(k,A,c)$-paths}, as this generalizes $(k,a)$-paths defined in \cite{MS13} for $A=\{a\}$ and $c_a=1$, and let $\mathcal{P}^{k,A,c}_n$ be the set of $(k,A,c)$-paths of length $n$. We also define the length of up, down, and level steps by $|u|=1$, $|d|=1$, and $|l_{a,b}|=a$, and the length $|P|$ of a path $P$ as the sum of the lengths of its steps.

On such a $(k,A,c)$-path $P$, call the blocks $dd$ and $l_{a,b}d$ \emph{weak double descents}, and call blocks $ud$ and $ul_{a,b}$, as well as the single step $l_{a,b}$ if it is the leftmost step of $P$, \emph{weak peaks} (in each case here, $a\in A$ and $b\in[c_a]$). Extending our peak height definition for $k$-Dyck paths, let the height of a weak peak be the second coordinate of the left endpoint of the down or level step in the peak. Furthermore, on $(k,A,c)$-paths $P$, define the following statistics:
\begin{itemize}

\item $\wpk_i(P)$, the number of non-rightmost weak peaks of $P$ at heights $i\negmedspace\mod k$, where $i=0,1,\dots,k-1$,

\item $\wpk(P)=\sum_{i=0}^{k-1}{\wpk_i(P)}$, the total number of non-rightmost peaks of $P$,

\item $\wdd(P)$, the number of weak double descents of $P$.

\end{itemize}

Then we have the following unique decomposition:
\[
P=L \quad \text{or} \quad P=P_0^{(0)}uP_1^{(1)}u\dots P_{k-1}^{(k-1)}uP_k^{(k)}dL,
\]
where each $P_i$ is a $(k,A,c)$-path of down-size $n_i$ (such that $1+\sum_{i=0}^{k}{n_i}=n$, the down-size of $P$) and $L$ is a block of level steps of allowed lengths and colors. Furthermore, as before, for each path $P_i$ we have the unique decomposition
\[
P_i=P_{i,0}^{(0)}uP_{i,1}^{(1)}u\dots P_{i,km_i-1}^{(km_i-1)}uD_i,
\]
where each $P_{i,j}$ is a $(k,A,c)$-path and $D_i$ is block of down-steps and level steps (i.e.\ no up-steps) containing exactly $m_i$ down-steps and possibly some level steps of any allowed lengths and colors. Then it is easy to see that, for each $i\in[0,k-1]$, the cyclic $i$-lowering map $\lambda_i$ that permutes the subpaths $P_{i,j}$ in the same way as in Corollary \ref{cor:cyclic-peaks}, preserves weak peak heights modulo $k$ as well as the number of weak peaks and weak double descents in $P_i^{(i)}$. The preceding discussion implies the following result.

\begin{theorem} \label{thm:weak-peak-dist}
The $(k+1)$-statistic $(\wpk_0,\wpk_1,\dots,\wpk_{k-1},\wdd)$ on $\mathcal{P}^{k,A,c}_n$ is jointly equidistributed with any of its permutations.
\end{theorem}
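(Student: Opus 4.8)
The plan is to prove the weak analogue of Theorem~\ref{thm:path-tree} and then invoke the (evident extension of the) symmetry statement for trees. Call a \emph{level-decorated $(k+1)$-ary tree} a $(k+1)$-ary tree in which every node additionally carries a finite tuple of level steps of allowed lengths and colors (almost always the empty tuple). The statistics $e_1,\dots,e_{k+1}$ are as before and do not see the decorations, so the proof of Proposition~\ref{prop:trees} --- recursively permute the subtrees of each node, each node dragging its decoration along --- applies verbatim: on level-decorated $(k+1)$-ary trees of any fixed size, $(e_1,\dots,e_{k+1})$ is jointly equidistributed with any of its permutations. It therefore suffices to construct a bijection $\psi$ from $\mathcal{P}^{k,A,c}_n$ onto a set of level-decorated $(k+1)$-ary trees that sends $(\wpk_0,\dots,\wpk_{k-1},\wdd)$ to $(e_1,\dots,e_{k+1})$ and preserves length; composing with the subtree permutation and $\psi^{-1}$ then realizes any permutation of the $(k+1)$ weak statistics as a length-preserving bijection of $\mathcal{P}^{k,A,c}_n$.

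The bijection $\psi$ copies the one in Theorem~\ref{thm:path-tree}, with level steps threaded in as decorations. Using the decomposition recalled just before the theorem --- $P=L$, or $P=P_0^{(0)}uP_1^{(1)}u\cdots uP_k^{(k)}dL$ --- send a purely level path $P=L$ to the empty tree decorated by $L$; otherwise create a root from the rightmost weak peak of $P$ (equivalently, from the final down-step together with the designated rightmost peak), hang the trailing block $L$ on the root, normalize each lifted sub-path $P_i^{(i)}$ by the cyclic $i$-lowering map $\lambda_i$ of Corollary~\ref{cor:cyclic-peaks} (so that $\lambda_i(P_i^{(i)})$ is a $(k,A,c)$-path with the same weak-peak heights modulo $k$ and the same $\wpk$, $\wdd$ as $P_i^{(i)}$ --- the key fact being the one the excerpt records just before the theorem), and take $\psi(\lambda_i(P_i^{(i)}))$ as the $(i+1)$-st subtree, exactly as in \eqref{eq:path-tree}. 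Label nodes by weak peaks and weak double descents as in the proof of Theorem~\ref{thm:path-tree}, and absorb into the node decorations those level steps that are not themselves weak peaks --- precisely, those immediately following a down- or level-step, together with the trailing blocks. That $\psi$ intertwines the two statistic vectors follows by induction on the down-size: the contribution to $\wpk_i(P)$ from inside $P_j^{(j)}$ equals, recursively, the $e_{i+1}$-contribution of the subtree $\psi(\lambda_j(P_j^{(j)}))$ --- the purpose of the cyclic $j$-lowering being exactly to cancel the shift in peak heights caused by placing $P_j$ at height $j$ --- while the rightmost weak peak of each nonempty $P_j$ ($j<k$) supplies the extra unit of $e_{j+1}$, and the weak double descent made by the final down-step when $P_k\neq\emptyset$ supplies the extra unit of $e_{k+1}$. (Alternatively, with less bookkeeping, one can mimic the generating-function proof of Corollary~\ref{cor:peak-dist}: the decomposition gives a functional equation for the generating function of $\mathcal{P}^{k,A,c}$ marked by length and by $\wpk_0,\dots,\wpk_{k-1},\wdd$, from which symmetry in the marking variables follows, e.g.\ by bootstrapping off the $k$-Dyck case.)

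The one genuinely new point, and the place I expect the real work to sit, is the bookkeeping of level steps, which play a dual role. A level step that is the leading step of the path or immediately follows an up-step is itself a weak peak, so it must be handled exactly like the down-step of a $ud$ peak: it opens a new sub-path, becomes a node of the tree, and carries a height that must be tracked modulo $k$. In particular such ``level-type'' weak peaks count toward the node total, so the correct analogue here of the identity $\pk+\dd=n-1$ is that $\psi(P)$ has one node per weak peak (of either type) and one per weak double descent; one should confirm this count, and that it forces $\psi$ to be length-preserving (equivalently, that the subtree permutation preserves the down-size, since each node's decoration records whether its weak peak is of $ud$- or level-type). Every other level step is inert and merely rides along inside a decoration, and one must check that cyclic lowering --- which permutes only the interior sub-paths $P_{i,j}$ and never the terminal down-and-level blocks $D_i$ --- leaves these inert steps undisturbed, and that the degenerate cases (a purely level path, with $L$ possibly empty, and empty sub-paths $P_j$, which contribute no boundary weak peak) are handled consistently. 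None of these checks is deep, but getting the dual role of level steps exactly right is where the content of the extension lies; once the $\wpk_i$/$\wdd$ classification of each step is seen to be local and respected by every operation used, the argument of Section~\ref{sec:main} carries over word for word.
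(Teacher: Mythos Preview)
Your primary plan—carry the bijection $\psi$ of Theorem~\ref{thm:path-tree} over to $(k,A,c)$-paths with level steps absorbed as decorations—is precisely the route the paper explicitly declines to write out, remarking that it is ``a pretty straightforward exercise to adapt the bijective proof'' and opting instead for the short generating-function argument that you relegate to a parenthetical. The paper's actual proof is just that parenthetical: set $f_{k,A,c}=\sum_{P\neq\emptyset} x^{|P|}\bigl(\prod_{i=0}^{k-1} q_i^{\wpk_i(P)}\bigr)q_k^{\wdd(P)}$, derive from the decomposition and the cyclic-lowering maps the functional equation
\[
f_{k,A,c}=c_A(x)(f_{k,A,c}+1)+x^{k+1}\prod_{i=0}^{k}(q_if_{k,A,c}+1),
\]
and observe that it is symmetric in $q_0,\dots,q_k$. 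So your alternative \emph{is} the paper's proof, and your main argument is the exercise the paper leaves to the reader; the bijective route buys an explicit length-preserving bijection realizing each permutation of the statistics, at the cost of the level-step bookkeeping the paper wished to avoid.

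One genuine wrinkle in your bijective sketch deserves flagging. Under the recursion $\psi(P)=(\psi(\lambda_0(P_0)),\dots,\psi(\lambda_k(P_k)))$ with a root, the resulting tree has exactly one node per down-step; but a sub-path $P_i$ that is a nonempty block of pure level steps has down-size $0$, hence yields an empty subtree and no $(i{+}1)$-st child, even though it \emph{does} contribute a non-rightmost weak peak (the $ul$ at its start, for $i<k$) or a weak double descent (the $ld$ after it, for $i=k$). Thus under the displayed recursion $e_{i+1}$ and $\wpk_i$ disagree already on the Motzkin path $uld$. Your instinct that ``level-type weak peaks become nodes'' is the right repair, but it is not what the recursion as written produces, and it contradicts your node count ``one node per weak peak and one per weak double descent'' only in the other direction. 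The clean fix is to let each nonempty pure-level $P_i$ spawn a leaf node (decorated by its level block), or equivalently to decorate child \emph{slots} rather than nodes and count $e_{i+1}$ as the number of slots whose subtree-or-decoration is nonempty. Either patch is easy and restores the correspondence; it is also exactly the sort of case analysis that makes the one-line generating-function proof the more economical choice here.
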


It is a rather straightforward exercise to adapt the bijective proof of Theorem~\ref{thm:peak-dist} to prove Theorem~\ref{thm:weak-peak-dist}, and we will leave it to the reader. Instead, we will give a short generating function argument similar to that given in Corollary \ref{cor:peak-dist} so as not to repeat essentially the same proof with minor variations. 

\begin{proof}
Let $c_A(x)=\sum_{a\in A}{c_ax^a}$ and $\mathcal{P}^{k,A,c}=\cup_{n=0}^{\infty}{\mathcal{P}^{k,A,c}_n}$, and
\begin{equation} \label{eq:fkac}
f_{k,A,c}=f_{k,A,c}(x;q_0,q_1,\dots,q_{k-1},q_k)=\sum_{P\in\mathcal{P}^{k,A,c}\setminus\{\emptyset\}}{x^{|P|}\left(\prod_{i=0}^{k-1}{q_i^{\wpk_i(P)}}\right)q_k^{\wdd(P)}}\, .
\end{equation}
Then, from the path decompositions above and the cyclic $i$-lowering maps, we obtain the functional equation
\begin{equation} \label{eq:weak-path-func-rec}
f_{k,A,c}=c_A(x)(f_{k,A,c}+1)+x^{k+1}\prod_{i=0}^{k}(q_if_{k,A,c}+1),
\end{equation}
Now notice that Equation \eqref{eq:weak-path-func-rec} is symmetric in the variables $q_0,q_1\dots,q_k$.
\end{proof}

Note also that $c_A(x)=0$ if $A=\emptyset$, so letting $A=\emptyset$ yields Theorem~\ref{thm:peak-dist}. 

\begin{example} \label{ex:motzkin-schroeder}
Letting $k=1$, we obtain the joint equidistribution of $(\wdd,\wpk)$ and $(\wpk,\wdd)$ on Motzkin paths by setting $A=\{1\}$, $c_1=1$ (see Figure \ref{fig:motzkin} for an example), and on Schr\"{o}der paths by setting $A=\{2\}$, $c_2=1$.
\end{example}

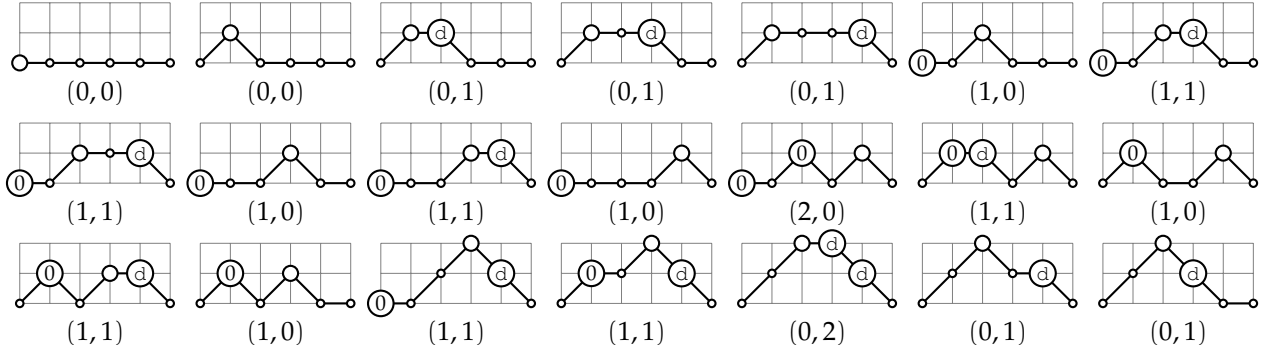
\begin{figure}[t] 
\begin{center}
\begin{tikzpicture}[scale=0.36]
\tikzset{every node}=[font=\scriptsize]
\begin{scope}[xshift=0cm,yshift=0cm]
\draw[help lines] (0,0) grid (5,2);
\draw[thick] (0,0) \vs{\ \,} \mls \vs{} \mls \vs{} \mls \vs{} \mls \vs{} \mls \vs{};
\node at (2.5,-1) {\small $(0,0)$};
\end{scope}
\begin{scope}[xshift=6cm,yshift=0cm]
\draw[help lines] (0,0) grid (5,2);
\draw[thick] (0,0) \vs{} \us \vs{\ \,} \mds \vs{} \mls \vs{} \mls \vs{} \mls \vs{};
\node at (2.5,-1) {\small $(0,0)$};
\end{scope}
\begin{scope}[xshift=12cm,yshift=0cm]
\draw[help lines] (0,0) grid (5,2);
\draw[thick] (0,0) \vs{} \us \vs{\ \,} \mls \vs{\mathtt{d}} \mds \vs{}  \mls \vs{} \mls \vs{};
\node at (2.5,-1) {\small $(0,1)$};
\end{scope}
\begin{scope}[xshift=18cm,yshift=0cm]
\draw[help lines] (0,0) grid (5,2);
\draw[thick] (0,0) \vs{} \us \vs{\ \,} \mls \vs{} \mls \vs{\mathtt{d}} \mds \vs{}  \mls \vs{};
\node at (2.5,-1) {\small $(0,1)$};
\end{scope}
\begin{scope}[xshift=24cm,yshift=0cm]
\draw[help lines] (0,0) grid (5,2);
\draw[thick] (0,0) \vs{} \us \vs{\ \,} \mls \vs{} \mls \vs{} \mls \vs{\mathtt{d}} \mds \vs{};
\node at (2.5,-1) {\small $(0,1)$};
\end{scope}
\begin{scope}[xshift=30cm,yshift=0cm]
\draw[help lines] (0,0) grid (5,2);
\draw[thick] (0,0) \vs{0} \mls \vs{} \us \vs{\ \,} \mds \vs{} \mls \vs{} \mls \vs{};
\node at (2.5,-1) {\small $(1,0)$};
\end{scope}
\begin{scope}[xshift=36cm,yshift=0cm]
\draw[help lines] (0,0) grid (5,2);
\draw[thick] (0,0) \vs{0} \mls \vs{} \us \vs{\ \,} \mls \vs{\mathtt{d}} \mds \vs{} \mls \vs{};
\node at (2.5,-1) {\small $(1,1)$};
\end{scope}
\begin{scope}[xshift=0cm,yshift=-4cm]
\draw[help lines] (0,0) grid (5,2);
\draw[thick] (0,0) \vs{0} \mls \vs{} \us \vs{\ \,} \mls \vs{} \mls \vs{\mathtt{d}} \mds \vs{};
\node at (2.5,-1) {\small $(1,1)$};
\end{scope}
\begin{scope}[xshift=6cm,yshift=-4cm]
\draw[help lines] (0,0) grid (5,2);
\draw[thick] (0,0) \vs{0} \mls \vs{} \mls \vs{} \us \vs{\ \,} \mds \vs{} \mls \vs{};
\node at (2.5,-1) {\small $(1,0)$};
\end{scope}
\begin{scope}[xshift=12cm,yshift=-4cm]
\draw[help lines] (0,0) grid (5,2);
\draw[thick] (0,0) \vs{0} \mls \vs{} \mls \vs{} \us \vs{\ \,} \mls \vs{\mathtt{d}} \mds \vs{};
\node at (2.5,-1) {\small $(1,1)$};
\end{scope}
\begin{scope}[xshift=18cm,yshift=-4cm]
\draw[help lines] (0,0) grid (5,2);
\draw[thick] (0,0) \vs{0} \mls \vs{} \mls \vs{} \mls \vs{} \us \vs{\ \,} \mds \vs{};
\node at (2.5,-1) {\small $(1,0)$};
\end{scope}
\begin{scope}[xshift=24cm,yshift=-4cm]
\draw[help lines] (0,0) grid (5,2);
\draw[thick] (0,0) \vs{0} \mls \vs{} \us \vs{0} \mds \vs{} \us \vs{\ \,} \mds \vs{};
\node at (2.5,-1) {\small $(2,0)$};
\end{scope}
\begin{scope}[xshift=30cm,yshift=-4cm]
\draw[help lines] (0,0) grid (5,2);
\draw[thick] (0,0) \vs{} \us \vs{0} \mls \vs{\mathtt{d}} \mds \vs{} \us \vs{\ \,} \mds \vs{};
\node at (2.5,-1) {\small $(1,1)$};
\end{scope}
\begin{scope}[xshift=36cm,yshift=-4cm]
\draw[help lines] (0,0) grid (5,2);
\draw[thick] (0,0) \vs{} \us \vs{0} \mds \vs{} \mls \vs{} \us \vs{\ \,} \mds \vs{};
\node at (2.5,-1) {\small $(1,0)$};
\end{scope}
\begin{scope}[xshift=0cm,yshift=-8cm]
\draw[help lines] (0,0) grid (5,2);
\draw[thick] (0,0) \vs{} \us \vs{0} \mds \vs{} \us \vs{\ \,} \mls \vs{\mathtt{d}} \mds \vs{};
\node at (2.5,-1) {\small $(1,1)$};
\end{scope}
\begin{scope}[xshift=6cm,yshift=-8cm]
\draw[help lines] (0,0) grid (5,2);
\draw[thick] (0,0) \vs{} \us \vs{0} \mds \vs{} \us \vs{\ \,} \mds \vs{} \mls \vs{};
\node at (2.5,-1) {\small $(1,0)$};
\end{scope}
\begin{scope}[xshift=12cm,yshift=-8cm]
\draw[help lines] (0,0) grid (5,2);
\draw[thick] (0,0) \vs{0} \mls \vs{} \us \vs{} \us \vs{\ \,} \mds \vs{\mathtt{d}} \mds \vs{};
\node at (2.5,-1) {\small $(1,1)$};
\end{scope}
\begin{scope}[xshift=18cm,yshift=-8cm]
\draw[help lines] (0,0) grid (5,2);
\draw[thick] (0,0) \vs{} \us \vs{0} \mls \vs{} \us \vs{\ \,} \mds \vs{\mathtt{d}} \mds \vs{};
\node at (2.5,-1) {\small $(1,1)$};
\end{scope}
\begin{scope}[xshift=24cm,yshift=-8cm]
\draw[help lines] (0,0) grid (5,2);
\draw[thick] (0,0) \vs{} \us \vs{} \us \vs{\ \,} \mls \vs{\mathtt{d}} \mds \vs{\mathtt{d}} \mds \vs{};
\node at (2.5,-1) {\small $(0,2)$};
\end{scope}
\begin{scope}[xshift=30cm,yshift=-8cm]
\draw[help lines] (0,0) grid (5,2);
\draw[thick] (0,0) \vs{} \us \vs{} \us \vs{\ \,} \mds \vs{} \mls \vs{\mathtt{d}} \mds \vs{};
\node at (2.5,-1) {\small $(0,1)$};
\end{scope}
\begin{scope}[xshift=36cm,yshift=-8cm]
\draw[help lines] (0,0) grid (5,2);
\draw[thick] (0,0) \vs{} \us \vs{} \us \vs{\ \,} \mds \vs{\mathtt{d}} \mds  \vs{} \mls\vs{};
\node at (2.5,-1) {\small $(0,1)$};
\end{scope}
\end{tikzpicture}
\caption{Weak peaks and weak double descents in Motzkin paths of length 5. A non-rightmost weak peak is labeled with $0$ (the only remainder modulo $k=1$). A weak double descent is labeled with $\mathtt{d}$. The rightmost weak peak is unlabeled. Below each path $P$ is the value of $(\wpk,\wdd)(P)$.}
\label{fig:motzkin}
\end{center}
\end{figure}

\subsection{Ballot paths} \label{subsec:ballot}

We can also generalize Theorem \ref{thm:peak-dist} to left prefixes of $k$-Dyck paths starting at $(0,0)$ and ending at height less than $k$. Define a \emph{$(k,m)$-ballot path} of down-size $n$ to be a path from $(0,0)$ to $((k+1)n+m,m)$ with steps $u=(1,1)$ and $d=(1,-k)$. Let $\mathcal{P}^{k,m}$ be the set of $(k,m)$-ballot paths, and let $\mathcal{P}^{k,m}_n$ be the set of $(k,m)$-ballot paths of down-size $n$.

Let $m\equiv r\!\pmod k$, so that $m=\ell k +r$ for some integers $\ell\ge 0$ and $0\le r\le k-1$. Our key observation here is that, for $m\not\equiv 0\!\pmod k$, a $(k,m)$-ballot path is a prefix of some $k$-Dyck path, and moreover, it can only be extended on the right to a $k$-Dyck path by using at least some up-steps (as well as down-steps). Thus, any suffix extending a $(k,m)$-ballot path to a $k$-Dyck path must contain a peak. Therefore, we need to modify the statistics we consider by letting $\pk^*_i(P)$ be the number of peaks in $(k,m)$-ballot path $P$ at heights $i\negthickspace\mod k$ (so these statistics count the rightmost peak as well).

\begin{theorem} \label{thm:ballot-peaks}
Let $m,\ell\ge 0$ and $0\le r\le k-1$ be integers such that $m=\ell k +r$. Then for any $0\le r\le k-1$, 
\begin{itemize}
\item the $(r+1)$-statistic $(\pk^*_0,\dots,\pk^*_r)$ on $\mathcal{P}^{k,m}_n$ is jointly equidistributed with any of its permutations.

\item the $(k-r-1)$-statistic $(\pk^*_{r+1},\dots,\pk^*_{k-1})$ on $\mathcal{P}^{k,m}_n$ is jointly equidistributed with any of its permutations.
\end{itemize}
\end{theorem}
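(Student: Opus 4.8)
The plan is to reduce the statement to the $k$-Dyck case by a ``cut at the structural up-steps'' decomposition of $(k,m)$-ballot paths, and then to run a short generating-function argument in the style of Corollary~\ref{cor:peak-dist} and the proof of Theorem~\ref{thm:weak-peak-dist}. For a $(k,m)$-ballot path $P$ and each $j\in\{0,1,\dots,m-1\}$, let $s_j$ be the step of $P$ immediately after the \emph{last} visit of $P$ to a height $\le j$; since $P$ ends at height $m>j$ this step exists, and because a down-step strictly lowers the height, $s_j$ must be an up-step from height $j$ to height $j+1$. The $s_j$ occur from left to right in the order $s_0,s_1,\dots,s_{m-1}$, and cutting $P$ at them yields a unique decomposition
\[
P=P_0^{(0)}\,u\,P_1^{(1)}\,u\,\cdots\,u\,P_{m-1}^{(m-1)}\,u\,P_m^{(m)},
\]
where each $P_j$ is a (possibly empty) $k$-Dyck path. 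This is a bijection between $\mathcal{P}^{k,m}_n$ and the tuples $(P_0,\dots,P_m)$ of $k$-Dyck paths whose down-sizes sum to $n$, the blocks being chosen independently.

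I would then record how $\pk^*$ transforms. Each connecting up-step $s_j$ is followed either by an up-step (the first step of $P_{j+1}^{(j+1)}$, or $s_{j+1}$ if $P_{j+1}=\emptyset$) or by nothing, so no $s_j$ is the top of a peak; since the $s_j$ are the only steps lying outside the blocks, every peak of $P$ lies inside a single block $P_j^{(j)}$. As $P_j^{(j)}$ is $P_j$ raised by $j$, a peak of $P_j$ at height $\equiv a\pmod k$ becomes a peak of $P$ at height $\equiv a+j\pmod k$; in particular, when $P_j\ne\emptyset$ the rightmost peak of $P_j$, which lies at height $\equiv 0\pmod k$, contributes a peak of $P$ at height $\equiv j\pmod k$. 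Let $\tilde f(x;q_0,\dots,q_{k-1})=f(x;q_0,\dots,q_{k-1},1)$, with $f$ the generating function of \eqref{eq:path-func}, so that $q_a$ marks the non-rightmost peaks at height $\equiv a\pmod k$; by the symmetry of the defining relation \eqref{eq:path-func-rec}, $\tilde f$ is symmetric in $q_0,\dots,q_{k-1}$. Summing the weight of $P_j^{(j)}$ over all choices of the $k$-Dyck path $P_j$ and using that a cyclic reindexing of $q_0,\dots,q_{k-1}$ leaves $\tilde f$ unchanged, the block $P_j^{(j)}$ contributes the factor $1+q_{j\bmod k}\tilde f$. Hence, if $g_{k,m}(x;q_0,\dots,q_{k-1})$ is the generating function for $\mathcal{P}^{k,m}$ with $x$ marking down-size and $q_i$ marking $\pk^*_i$, then
\[
g_{k,m}=\prod_{j=0}^{m}\bigl(1+q_{j\bmod k}\tilde f\bigr)=\prod_{i=0}^{r}\bigl(1+q_i\tilde f\bigr)^{\ell+1}\prod_{i=r+1}^{k-1}\bigl(1+q_i\tilde f\bigr)^{\ell},
\]
since for each residue $i$ there are exactly $\ell+1$ indices $j\in[0,m]$ with $j\equiv i\pmod k$ when $i\le r$, and exactly $\ell$ such indices when $r<i\le k-1$.

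The right-hand side is manifestly invariant under any permutation of $\{q_0,\dots,q_r\}$ (the first product is symmetric in these variables, and $\tilde f$ is symmetric in all of $q_0,\dots,q_{k-1}$) and, separately, under any permutation of $\{q_{r+1},\dots,q_{k-1}\}$; extracting the coefficient of $x^n$ gives the two asserted joint equidistributions on $\mathcal{P}^{k,m}_n$. I expect the only step needing real care to be the first one — verifying that the ``last visit below height $j$'' recipe is genuinely a bijection onto tuples of $k$-Dyck paths, and that the connecting up-steps never create peaks, so that $\pk^*(P)$ splits cleanly as the sum of the per-block contributions. After that the computation is bookkeeping essentially identical to the proof of Theorem~\ref{thm:weak-peak-dist}.
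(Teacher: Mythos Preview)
Your proof is correct and follows essentially the same route as the paper: both use the decomposition $P=P_0^{(0)}uP_1^{(1)}u\cdots uP_m^{(m)}$ into $k$-Dyck blocks, derive the product formula $g_{k,m}=\prod_{i=0}^{r}(1+q_i f)^{\ell+1}\prod_{i=r+1}^{k-1}(1+q_i f)^{\ell}$, and conclude from the symmetry of $f$ in the $q_i$. The only cosmetic differences are that the paper retains the variable $q_k$ tracking $\dd$ (which it uses afterwards to extract the explicit distribution \eqref{eq:peak-ballot-dist}) and phrases the height shift via the cyclic lowering maps $\kappa^j$, whereas you set $q_k=1$ and invoke the symmetry of $\tilde f$ directly; your more explicit ``last visit below height $j$'' justification of the decomposition is a welcome addition.
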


\begin{proof}
Note that any $(k,m)$-ballot path $P$ can be uniquely decomposed as
\begin{equation} \label{eq:ballot-decomp}
P=P_0^{(0)}uP_1^{(1)}u\dots uP_{m-1}^{(m-1)}uP_m^{(m)},
\end{equation}
where each $P_i$, $i=0,1,\dots,m$, is a $k$-Dyck path, on which the multistatistics $(\pk_0,\dots,\pk_r)$ and $(\pk_{r+1},\dots,\pk_{k-1})$ are jointly equidistributed with any of their respective permutations. The only peaks not counted by the unstarred statistics are the rightmost peaks. Note that the rightmost peak of each subpath $P_i^{(i)}$ is at height $i\!\!\mod k$. Thus, $\ell+1$ of these subpaths, if nonempty, have their rightmost peak at heights in $[0,r]$ modulo $k$, whereas only $\ell$ of these subpaths, if nonempty, have their rightmost peak at heights in $[r+1,k-1]$ modulo $k$. Thus, for $i=0,1,\dots,k-1$,
\begin{equation} \label{eq:peak-ballot-rec}
\begin{split}
\pk^*_i(P) 
&=\sum_{j=0}^{m}{\pk^*_i(P_j^{(j)})}
=
\begin{cases}
\sum_{j=0}^{m}{\pk_i(P_j^{(j)})}+\sum_{j=0}^{\ell}[P_{kj+i}\ne\emptyset], & \text{ if } 0\le i\le r,\\
\sum_{j=0}^{m}{\pk_i(P_j^{(j)})}+\sum_{j=0}^{\ell-1}[P_{kj+i}\ne\emptyset], & \text{ if } r+1\le i\le k-1,
\end{cases}
\\
&=
\begin{cases}
\sum_{j=0}^{m}{\pk_i(\kappa^j(P_j))}+\sum_{j=0}^{\ell}[P_{kj+i}\ne\emptyset], & \text{ if } 0\le i\le r,\\
\sum_{j=0}^{m}{\pk_i(\kappa^j(P_j))}+\sum_{j=0}^{\ell-1}[P_{kj+i}\ne\emptyset], & \text{ if } r+1\le i\le k-1,
\end{cases}
\end{split}
\end{equation}
where $\kappa$ is the cyclic shift map defined on page \pageref{def:cyclic-shift} and, as before, $[\cdot]$ is the Iverson bracket. 

The distribution of these statistics can be computed as well. Let
\begin{equation} \label{eq:gkm}
g_{k,m}=g_{k,m}(x;q_0,q_1,\dots,q_k)=\sum_{n=0}^{\infty}\sum_{P\in\mathcal{P}^{k,m}_n}{x^n\left(\prod_{i=0}^{k-1}{q_i^{\pk^*_i(P)}}\right)q_k^{\dd(P)}},
\end{equation}
then, from \eqref{eq:peak-ballot-rec},
\begin{equation} \label{eq:gkm-rec}
g_{k,m}=g_{k,\ell k +r}=\left(\prod_{i=0}^{r}{(q_i f+1)}\right)^{\ell+1}\left(\prod_{i=r+1}^{k-1}{(q_i f+1)}\right)^{\ell},
\end{equation}
where the function $f$ is defined in \eqref{eq:path-func} and satisfies \eqref{eq:path-func-rec}. The multipliers $q_i$ in \eqref{eq:gkm-rec} correspond to the fact that the rightmost peak of a nonempty $P_{kj+i}^{(kj+i)}$ is at height $i$ modulo $k$. From \eqref{eq:path-func-rec}, $f$ is symmetric in $q_0,\dots,q_k$, and therefore it is clear from \eqref{eq:gkm-rec} that $g_{k,\ell k+r}$ is symmetric in $q_0,\dots,q_r$ as well as in $q_{r+1},\dots,q_{k-1}$.
\end{proof}

Applying Lagrange inversion to \eqref{eq:gkm-rec} as in Corollary \ref{cor:peak-dist} yields after some routine manipulations that the number of paths $P\in\mathcal{P}^{k,\ell k + r}_n$ with $(\pk^*_0,\dots,\pk^*_{k-1},\dd)(P)=(s_0,\dots,s_{k-1},s_k)$, where $\sum_{i=0}^{k}{s_i}=n\ge 1$, is
\begin{multline} \label{eq:peak-ballot-dist}
\left[x^n\prod_{i=0}^{k}{q_i^{s_i}}\right]g_{k,\ell k + r}
\\=
\frac{1}{n}\left(\frac{\ell+1}{n+\ell+1}\sum_{i=0}^{r}{s_i}+\frac{\ell}{n+\ell}\sum_{i=r+1}^{k-1}{s_i}\right)\left(\prod_{i=0}^{r}{\binom{n+\ell+1}{s_i}}\right)\left(\prod_{i=r+1}^{k-1}{\binom{n+\ell}{s_i}}\right)\binom{n}{s_k}.
\end{multline}

Note that letting $\ell=0$ and $r=0$ in \eqref{eq:peak-ballot-dist} yields the result of Corollary \ref{cor:peak-dist}, since $(r_0,r_1,\dots,r_k)$ in Corollary \ref{cor:peak-dist} is equal to $(s_0-1,s_1,\dots,s_k)$ in the notation of \eqref{eq:peak-ballot-dist}.

\begin{example} \label{ex:ternary-1}
Letting $k=2$ and $r=1$, we see that statistics $(pk^*_0,pk^*_1)$ and $(pk^*_1,pk^*_0)$ are equidistributed on $(2,1)$-ballot paths of down-size $n$. In particular, statistics $\pk^*_0$ (even-height peaks) and $\pk^*_1$ (odd-height peaks) are equidistributed on those paths. For an example of this, see the last 7 paths in Figure \ref{fig:ternary} (rightmost three in row 2 and all of row 3), delete the rightmost $ud$ from each of those, and ignore the $\dd$ statistic.
\end{example}

The above result can be generalized for the weak versions of the corresponding statistics.

Let $A$ and $c$ be defined as for $(k,A,c)$-paths. Define a \emph{$(k,A,c,m)$-path} of down-size $n$ to be a path from height $0$ to height $m\in[0,k-1]$ with steps $u=(1,1)$, $d=(1,-k)$, and any level steps $l_{a,b}=(a,0)$ of color $b$, for $a\in A$ and $b\in[c_a]$. Note that a $(k,A,c,m)$-path of down-size $n$ (i.e.\ with $n$ down-steps) has $kn+m$ up-steps. Let $\mathcal{P}^{k,A,c,r}_n$ be the set of such paths, and let $\mathcal{P}^{k,A,c,m}=\cup_{n=0}^{\infty}{\mathcal{P}^{k,A,c,m}_n}$. Note that $\mathcal{P}^{k,A,c}_n=\mathcal{P}^{k,A,c,0}_n$, and therefore, $\mathcal{P}^{k,A,c}=\mathcal{P}^{k,A,c,0}$. Let $\wpk^*_i(P)$ ($i=0,1,\dots,k-1$) be the number of (possibly rightmost) weak peaks of $P$ at heights $i\negmedspace\mod k$. Then we have the following result.

\begin{theorem} \label{thm:weak-ballot-peaks}
Let $m,\ell\ge 0$ and $0\le r\le k-1$ be integers such that $m=\ell k +r$. Then for any $0\le r\le k-1$, 
\begin{itemize}
\item the $(r+1)$-statistic $(\wpk^*_0,\dots,\wpk^*_r)$ on $\mathcal{P}^{k,A,c,m}_n$ is jointly equidistributed with any of its permutations.

\item the $(k-r-1)$-statistic $(\wpk^*_{r+1},\dots,\wpk^*_{k-1})$ on $\mathcal{P}^{k,A,c,m}_n$ is jointly equidistributed with any of its permutations.
\end{itemize}
\end{theorem}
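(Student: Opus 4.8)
The plan is to transcribe the proof of Theorem~\ref{thm:ballot-peaks}, with $(k,A,c)$-paths in place of $k$-Dyck paths and the generating function $f_{k,A,c}$ of \eqref{eq:fkac} in place of $f$. The two ingredients I will use are the symmetry of $f_{k,A,c}$ in $q_0,\dots,q_k$ (the content of \eqref{eq:weak-path-func-rec}) and a level-step analogue of the ballot decomposition \eqref{eq:ballot-decomp}. Since the convention $A=\{\infty\}$ recovers Theorem~\ref{thm:ballot-peaks} verbatim, one may carry everything out uniformly.

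First I would establish that every $(k,A,c,m)$-path $P$ has a unique decomposition
\[
P=P_0^{(0)}uP_1^{(1)}u\dots uP_{m-1}^{(m-1)}uP_m^{(m)},
\]
with each $P_i$ a $(k,A,c)$-path, obtained by cutting $P$ just before the $u$-step that is the \emph{last} upward crossing of the half-integer level $i-\tfrac12$, for $i=1,\dots,m$. Since a down-step lowers the height and a level step leaves it unchanged, each such crossing is an up-step; and because $P$ ends at height $m\ge i$, the portion between two consecutive cuts stays weakly above the relevant level and returns to it, hence is a $(k,A,c)$-path after lowering. Any trailing level steps of $P$ get absorbed into $P_m$. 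Next I would transfer the statistics through this decomposition: the $m$ separators are up-steps, the first step of any block $P_i^{(i)}$ is never a down-step (lest $P_i^{(i)}$ dip below its base height $i$) and its last step is never an up-step, so no weak peak and no weak double descent straddles a block boundary; when $P_i$ begins with a level step $l_{a,b}$, the weak peak $u\,l_{a,b}$ of $P$ is simply that leftmost level step of $P_i$, which is already a weak peak of $P_i$. Thus the weak peaks and weak double descents of $P$ are exactly those of the $P_i^{(i)}$. As in the $k$-Dyck case, one checks via the decomposition of Section~\ref{subsec:level} that the rightmost weak peak of a nonempty $(k,A,c)$-path has height $\equiv 0\pmod k$ (it is the unique leftmost level step when there are no up-steps, and otherwise the last up-step together with the first step of the trailing down-and-level block $D_i$, which begins at a height divisible by $k$). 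Consequently a weak peak of $P_i$ at height $h\pmod k$ contributes to $\wpk^*_{(h+i)\bmod k}(P)$, and each nonempty $P_i$ contributes one additional weak peak to $\wpk^*_{i\bmod k}(P)$ coming from its rightmost weak peak.

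Then I would set
\[
g_{k,A,c,m}=g_{k,A,c,m}(x;q_0,\dots,q_k)=\sum_{P\in\mathcal{P}^{k,A,c,m}}x^{|P|}\left(\prod_{i=0}^{k-1}q_i^{\wpk^*_i(P)}\right)q_k^{\wdd(P)},
\]
sum the contribution of a single block $P_i$ over all $(k,A,c)$-paths, the empty one included, and use the $q_0,\dots,q_k$-symmetry of $f_{k,A,c}$ to undo the cyclic relabelling of $q_0,\dots,q_{k-1}$ imposed by lifting $P_i$ by $i$; this shows that block $P_i$ contributes the factor $q_{i\bmod k}f_{k,A,c}+1$. Writing $m=\ell k+r$ with $0\le r\le k-1$ and noting that among $0,1,\dots,m$ exactly $\ell+1$ indices are congruent to each $t\in[0,r]$ and exactly $\ell$ are congruent to each $t\in[r+1,k-1]$, I obtain
\[
g_{k,A,c,\ell k+r}=x^m\left(\prod_{i=0}^{r}(q_if_{k,A,c}+1)\right)^{\ell+1}\left(\prod_{i=r+1}^{k-1}(q_if_{k,A,c}+1)\right)^{\ell}.
\]
Since $f_{k,A,c}$ is symmetric in $q_0,\dots,q_k$, this expression is symmetric in $q_0,\dots,q_r$ and, separately, in $q_{r+1},\dots,q_{k-1}$, which is exactly the two asserted joint equidistributions.

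The step I expect to be the main obstacle is the middle one: the careful bookkeeping of level steps at block boundaries --- a block may now begin with a level step, creating the weak peak $u\,l_{a,b}$ rather than destroying a peak --- together with the accounting of the rightmost weak peak of each, possibly empty, block. It is precisely this rightmost-weak-peak accounting that sorts the $m+1$ blocks into $\ell+1$ and $\ell$ classes and thereby explains why $q_0,\dots,q_{k-1}$ cannot be permuted across the divide at $r$. Everything else is a routine transcription of the arguments already given for Theorems~\ref{thm:ballot-peaks} and~\ref{thm:weak-peak-dist}.
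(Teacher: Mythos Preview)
Your proposal is correct and follows essentially the same route as the paper: the same decomposition $P=P_0^{(0)}uP_1^{(1)}u\dots uP_m^{(m)}$ into $(k,A,c)$-blocks, the same generating function $g_{k,A,c,m}$, and the same product formula $x^m\prod_{i=0}^{r}(q_if_{k,A,c}+1)^{\ell+1}\prod_{i=r+1}^{k-1}(q_if_{k,A,c}+1)^{\ell}$, after which symmetry of $f_{k,A,c}$ in $q_0,\dots,q_k$ finishes the job. The one small deviation is that you justify the per-block factor $q_{i\bmod k}f_{k,A,c}+1$ by directly invoking the $q$-symmetry of $f_{k,A,c}$ to undo the cyclic shift of the variables caused by lifting, whereas the paper (implicitly, by pointing back to the proof of Theorem~\ref{thm:ballot-peaks}) does this via the cyclic lowering map $\lambda^i$; these are equivalent, and your boundary bookkeeping for the weak peak $u\,l_{a,b}$ is exactly the extra care the paper leaves to the reader.
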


\begin{proof}
Note that any $(k,A,c,m)$-path $P$ can be uniquely decomposed as
\begin{equation} \label{eq:ballot-decomp-m}
P=P_0^{(0)}uP_1^{(1)}u\dots P_{m-1}^{(m-1)}uP_m^{(m)},
\end{equation}
where each $P_i$, $i=0,1,\dots,m$, is a $(k,A,c)$-path. 
Define
\begin{equation} \label{eq:gkacr}
g_{k,A,c,m}=g_{k,A,c,m}(x;q_0,q_1,\dots,q_k)=\sum_{P\in\mathcal{P}^{k,A,c,r}}{x^{|P|}\left(\prod_{i=0}^{k-1}{q_i^{\wpk^*_i(P)}}\right)q_k^{\wdd(P)}},
\end{equation}
then the decomposition \eqref{eq:ballot-decomp-m} implies, as in the proof of Theorem \ref{thm:ballot-peaks}, that
\[
g_{k,A,c,m}=g_{k,A,c,\ell k+r}=
x^m\left(\prod_{i=0}^{r}{(q_i f_{k,A,c}+1)}\right)^{\ell+1}\left(\prod_{i=r+1}^{k-1}{(q_i f_{k,A,c}+1)}\right)^{\ell},
\]
where $f_{k,A,c}$ is defined in \eqref{eq:fkac} and satisfies \eqref{eq:weak-path-func-rec}, which is symmetric in $q_0,\dots,q_k$. Therefore, $g_{k,A,c,\ell k+r}$ is symmetric in $q_0,\dots,q_r$ and in $q_{r+1},\dots,q_{k-1}$.
\end{proof}

\bigskip

It would also be interesting to find a natural generalization of these results to more general families of lattice paths, including walks and bridges \cite{BF02}, where the paths are allowed go below the $x$-axis. However, given up-steps $(1,1)$, down-steps $(1,-k)$, and an arbitrary set of level steps as unit path steps, the peak height statistics modulo $k$ considered in this paper are no longer equidistributed either on walks (paths that may go below the $x$-axis and end anywhere) or on bridges (walks that end on the $x$-axis).

{\small
\begin{center}
\textsc{Acknowledgements}
\end{center}

The author would like to thank Lou Shapiro, Sergey Kirgizov, Sam Hopkins, and the anonymous referees for their valuable suggestions on improving the presentation of the results.
}

\small


\begin{thebibliography}{99}

\bibitem{BF02} C.\ Banderier, P.\ Flajolet, Basic analytic combinatorics of directed lattice paths, \emph{Theor.\ Comp.\ Sci.} \textbf{281} (2002), 37--80.

\bibitem{Deutsch99} E.\ Deutsch, An involution on Dyck paths and its consequences, \emph{Discrete Math.} \textbf{204} (1999), 163--166.

\bibitem{DNSD} I.\ DeJager, M.\ Naquin, F.\ Seidl, P.\ Drube, Colored Motzkin paths of higher order, \emph{J.\ Integer Seq.} \textbf{24} (2021), \href{https://cs.uwaterloo.ca/journals/JIS/VOL24/Drube/drube7.html}{Article 21.4.6}.

\bibitem{MS13} T.\ Mansour, M.\ Shattuck, Counting humps and peaks in generalized Motzkin paths, \emph{Discrete Appl.\ Math.} \textbf{161} (2013), 2213--2216.

\bibitem{OEIS} N.J.A.\ Sloane, ed., The On-Line Encyclopedia of Integer Sequences, published electronically at \href{https://oeis.org}{https:/\!\!/oeis.org}, 2020.

\bibitem{StEC2} R.P.\ Stanley, \emph{Enumerative Combinatorics}, vol.\ 2, Cambridge University Press, 1999.

\bibitem{Wilf} H.S.\ Wilf, \emph{generatingfunctionology}, 2nd ed., Academic Press, Inc., Boston, MA, 1994.

\end{thebibliography}
\end{document}